\newtheorem {theorem} {Theorem}
\newtheorem {proposition} [theorem]{Proposition}
\newtheorem {lemma}  [theorem]{Lemma}
\newtheorem {remark} [theorem]{Remark}
\newtheorem{mtheorem}{Theorem}
\tikzset{node distance=3cm, auto}
\begin{document}

\title[Canard cycles and non-linear regularizations]{
Canard cycles of non-linearly regularized piecewise smooth vector fields}

\author[P. De Maesschalck, R. Huzak and O.H. Perez]
{Peter De Maesschalck$^{1}$, Renato Huzak$^{1}$ 
and Otavio Henrique Perez$^{2,1}$}

\address{$^{1}$Hasselt University, Campus Diepenbeek, Agoralaan Gebouw D, 3590 Diepenbeek, Belgium}

\address{$^{2}$Universidade de S\~{a}o Paulo (USP), Instituto de Ci\^{e}ncias Matem\'aticas e de Computa\c{c}\~{a}o (ICMC). Avenida Trabalhador S\~{a}o Carlense, 400, CEP 13566-590, S\~{a}o Carlos, S\~{a}o Paulo, Brazil.}

\email{peter.demaesschalck@uhasselt.be}
\email{renato.huzak@uhasselt.be}
\email{otavio.perez@icmc.usp.br}

\thanks{ .}

\subjclass[2020]{34D15.}

\keywords {Geometric singular perturbation theory, Non-linear regularization, Piecewise smooth vector fields, Slow divergence integral, Slow-fast Hopf point.}
\date{}
\dedicatory{}

\begin{abstract}
The main purpose of this paper is to study limit cycles in non-linear regularizations of planar piecewise smooth systems with fold points (or more degenerate tangency points) and crossing regions. We deal with a slow fast Hopf point after non-linear regularization and blow-up. We give a simple criterion for upper bounds and the existence of limit cycles of canard type, expressed in terms of zeros of the slow divergence integral. Using the criterion we can construct a quadratic regularization of piecewise linear center such that for any integer $k>0$ it has at least $k+1$ limit cycles, for a suitably chosen monotonic transition function $\varphi_k:\mathbb{R}\rightarrow\mathbb{R}$. We prove a similar result for regularized invisible-invisible fold-fold singularities of type II$_2$. Canard cycles of dodging layer are also considered, and we prove that such limit cycles undergo a saddle-node bifurcation.

\end{abstract}

\maketitle

\section{Introduction and statement of the problem}

In the framework of planar piecewise smooth vector fields (PSVF for short), the fold-fold singularity is known for its rich and interesting bifurcation diagram. Such a singularity can be classified into three cases: visible-visible (VV), visible-invisible (VI) and invisible-invisible (II). Each case has subcases that must be considered depending on the \emph{sliding} and \emph{crossing} regions of the switching locus $\Sigma$. See Figure \ref{fig-fold-fold} in Section \ref{section-continuous-comb} and \cite[Section 3.2]{KGR} for further details on bifurcations of such singularity.

It is known that a generic fold-fold singularity has codimension 1 (see, for instance, \cite[Subsection 4.1.1]{GST}). In fact, in the Case II, one of the possible subcases is called \emph{non-smooth focus} or \emph{pseudo focus} (which is also denoted by II$_{2}$ in the above references), since in a small neighborhood of this singularity the switching locus has only crossing points and it presents a focus-like behavior. As discussed in \cite[Sections 4 and 7]{GST}, depending on the coefficients of the Taylor series of the \emph{first return map}, the II$_{2}$ fold-fold singularity can have codimension equal or higher than 1. In the $\operatorname{cod}1$ case, there is one \emph{crossing limit cycle} bifurcating from it. 

In papers \cite{BLS, KH} the authors provided a systematic analysis of regularizations of PSVFs having a (generic) fold-fold singularity positioned at the origin. Concerning the II$_{2}$ case, the authors proved the existence of two limit cycles of the regularized system for a suitable \emph{transition function} $\varphi$ and region of the parameter space $(\varepsilon,\mu)$, in which $\varepsilon$ and $\mu$ stand for the singular perturbation and bifurcation parameters, respectively. It is important to remark that the results obtained in \cite{BLS, KH} concerned the Sotomayor--Teixeira regularization \cite{SotoTeixeira} (ST regularization for short). See also Section \ref{section-definitions}.

In \cite{SotoMachado} the authors studied conditions that a PSVF $Z$ must satisfy so that its ST regularization is structurally stable. Concerning limit cycles, in \cite[Proposition 13]{SotoMachado} the authors proved that hyperbolic crossing limit cycles of $Z$ persist after ST regularization (for $\varepsilon > 0$ sufficiently small). One of the limit cycles of the ST-regularized II$_{2}$ fold-fold obtained in \cite{BLS, KH} is related to the crossing one of $Z$. The second limit cycle obtained is located inside the regularization stripe and its existence depends on the transition function adopted.

Even though the ST regularization is widely used in both applied and theoretical problems, it is quite natural to ask what happens to the number of limit cycles if one applies other regularization processes. In this paper, our goal is to study planar PSVFs in the presence of fold points (or more degenerate tangency points) and crossing regions via \emph{non-linear regularizations} \cite{NovaesJeffrey, SilvaSarmientoNovaes}. See Section \ref{section-regularization} for a precise definition.

Non-linear regularizations can produce different phenomena which the ST regularization cannot (see also \cite{PRS}), and one of our goals is to generate more limit cycles than those $2$ obtained in \cite{BLS, KH} (see Theorems \ref{thm-center} and \ref{thm-ii2} in Subsection \ref{section-state-resul}). This paper can also be seen as continuation of \cite{HuzakKristiansen}, in which the authors applied slow divergence integrals to study the number of limit cycles of the regularized fold-fold VI$_{3}$. We highlight that in previous papers such tool was used near fold-fold singularities having sliding regions. To the best of our knowledge, this is the first time that the slow divergence integral is used near fold-fold singularities having crossing regions.

Since the notion of slow divergence integral plays a key role in this paper, we will first explain it for planar smooth slow-fast systems \cite[Chapter 5]{DMDR} and regularized planar piecewise smooth systems with sliding \cite{HuzakKristiansen,HuzakKristiansen2,HuzakKristiansen3}. 
In the planar slow-fast setting and canard theory, the slow divergence integral was developed by De Maesschalck, Dumortier and Roussarie (see \cite{DeMaesschalckDumortier1,DMDR,DeMaesschalckHuzak,DPR,DR1996} and references therein). Consider, for example, the following smooth system with a slow-fast Hopf point at the origin $(x,y)=(0,0)$
\begin{align}\label{SDI-example-intro}
  \begin{cases} 
  \dot x=-xy+\varepsilon\left(\alpha-y+y^2f(y)\right),\\
  \dot y=x ,\\ 
  \end{cases}
  \end{align}
where $\varepsilon\ge 0$ is the singular perturbation parameter kept small, $\alpha\in \mathbb R$ is close to $0$ and $f$ is a smooth function. In this paper, by ``smooth" we mean $C^\infty$-smooth. If $\varepsilon=0$ in \eqref{SDI-example-intro}, then we deal with the fast subsystem (often called the fast dynamics). The fast subsystem has the curve of singularities $\{x=0\}$ which is normally attracting when $y>0$ (the nonzero eigenvalue is negative) and normally repelling when $y<0$ (the nonzero eigenvalue is positive). These two branches of the line of singularities are separated by a singularity of nilpotent type at $(0,0)$. The fast fibers are parabolas $x=-\frac{1}{2}y^2+C$ and the line of singularities has a (quadratic) contact with the fibers at the origin (sometimes we call the origin a contact point).  We refer to Figure \ref{fig-example-intro}.

\begin{figure}[ht]\center{
\begin{overpic}[width=0.2\textwidth]{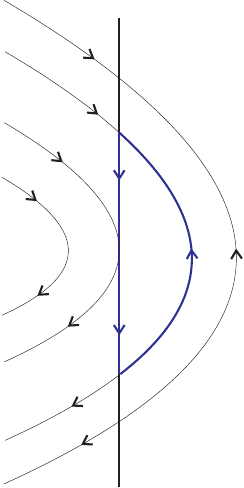}
\put(31,48){$\Gamma_{y}$}
\end{overpic}}
\caption{\footnotesize{Phase portrait of system \eqref{SDI-example-intro}. The canard cycle $\Gamma_{y}$ is highlighted in blue.}}
\label{fig-example-intro}
\end{figure}

An important observation is that, near normally hyperbolic points $y\ne 0$, there exist invariant manifolds of \eqref{SDI-example-intro} asymptotic to the line $\{x=0\}$ (they correspond to center manifolds if one augments system \eqref{SDI-example-intro} with $\dot \varepsilon=0$). Using standard asymptotic expansions in $\varepsilon$ (see e.g. \cite{DMDR}) we obtain
$$x=\varepsilon\left(\frac{\alpha-y+y^2f(y)}{y}+O(\varepsilon)\right).$$

If we now substitute this for $x$ in the $y$-component of \eqref{SDI-example-intro}, divide out $\varepsilon$ and let $\varepsilon$ tend to $0$, then we get the slow dynamics \cite[Chapter 3]{DMDR} 
$$y'=\frac{\alpha-y+y^2f(y)}{y}, \ y\ne 0.$$

Notice that for $\alpha=0$ the slow dynamics has a removable singularity in $y=0$ and it is regular there ($y'=-1+yf(y)$). It is also clear that the slow dynamics points (at least near $y=0$) from the normally attracting branch $y>0$ to the normally repelling branch $y<0$. This produces so-called canard trajectories of \eqref{SDI-example-intro} which follow the attracting branch, pass through the contact point and then stay close to the repelling branch for some time.  

We define now the notion of slow divergence integral \cite[Chapter 5]{DMDR}. Suppose that the slow dynamics has no singularities. The slow divergence integral computed along the slow segment $[-y,y]\subset \{x=0\}$ for $\alpha=0$ is given by
$$I(y)=\int_{-y}^y\frac{sds}{-1+sf(s)}, \ y>0.$$

This is an integral of the divergence of \eqref{SDI-example-intro} for $\varepsilon=0$ ($-y$), with respect to
the slow time denoted by $\tau$ ($d\tau=\frac{dy}{-1+yf(y)}$). It is well-known that zeros of the function $ I$ provide candidates for limit cycles of \eqref{SDI-example-intro}, produced by canard cycles. More precisely, for a fixed $y>0$ and $(\varepsilon,\alpha)=(0,0)$, the canard cycle $\Gamma_y$ consists of the segment $[-y,y]$ and the fast orbit connecting $(0,-y)$ and $(0,y)$ (see Figure \ref{fig-example-intro}). When $I$ has a zero of multiplicity $k\ge 1$ at $y=y_0$, the canard cycle $\Gamma_{y_0}$ can generate at most $k+1$ limit cycles for $(\varepsilon,\alpha)$ close to $(0,0)$ (see \cite{DeMaesschalckDumortier3}).  
\smallskip

The notion of slow divergence integral in a regularized piecewise smooth VI$_3$ model was introduced in \cite{HuzakKristiansen,HuzakKristiansen2}. Near the VI$_3$ fold-fold singularity \cite{HuzakKristiansen,KGR}, the Filippov sliding vector field \cite{Filippov} points from the stable sliding region
to the unstable sliding region with non-zero speed (see Figure \ref{fig-VI3-intro}(a)).

\begin{figure}[ht]\center{
\begin{overpic}[width=0.55\textwidth]{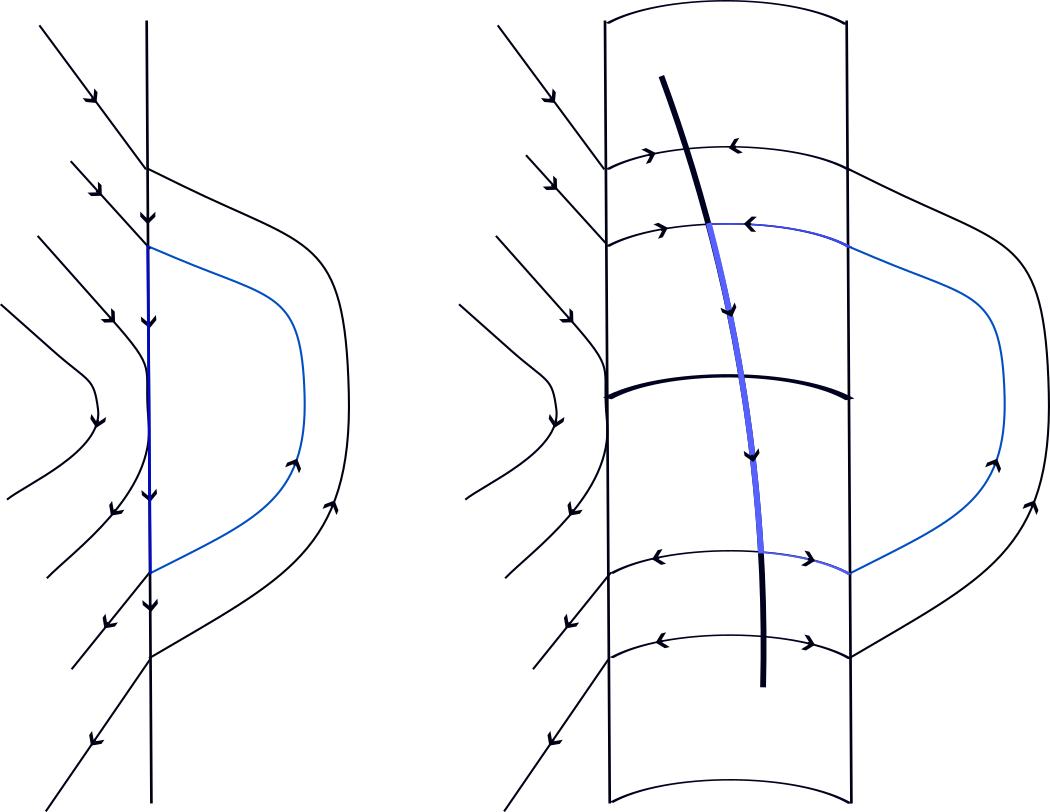}
\put(23,36){$\widehat\Gamma$}
\put(11,-4){(a)}
\put(89,36){$\widehat\Gamma$}
\put(66,10){$C_0$}
\put(62,37){$C_1$}
\put(65,-4){(b)}
\end{overpic}}
\caption{\footnotesize{(a) The VI$_3$ fold-fold singularity (the folds have opposite visibility) where $\{x=0\}$ is the switching line and the Filippov sliding vector field points from the stable sliding region $y>0$
to the unstable sliding region $y<0$ with nonzero speed. $\widehat\Gamma$ is called a sliding cycle. (b) Dynamics of a regularized VI$_3$ model on the blow-up cylinder. $C_0$ and $C_1$ are curves of singular points.}} 
\label{fig-VI3-intro}
\end{figure}

Notice that the graphic $\widehat\Gamma$ in Figure \ref{fig-VI3-intro}(a), consisting of an orbit located in the half-plane
$x\ge 0$ with invisible fold point and the portion of $\{x=0\}$ connecting the end points of that orbit, is similar to the canard cycle $\Gamma_y$ in Figure \ref{fig-example-intro} (the switching line with the Filippov sliding vector field defined on it plays the
role of the curve of singularities of \eqref{SDI-example-intro} with the associated slow dynamics). We call $\widehat\Gamma$ a sliding cycle.

Following \cite{HuzakKristiansen,HuzakKristiansen2}, we can connect $\widehat\Gamma$ with the notion of slow divergence integral using a linear regularization of the VI$_3$ fold-fold singularity. The regularized system becomes a slow-fast system upon a suitable cylindrical blow-up of the switching line, and then we can compute the slow divergence integral along the slow segment of the blown up sliding cycle $\widehat\Gamma$ (see Figure \ref{fig-VI3-intro}(b)). The curve of singularities $C_0$ of the slow-fast system defined on the
blow-up cylinder is normally hyperbolic away from the intersection with $C_1$ and the flow of the Filippov sliding vector field is the slow dynamics along $C_0$ (regularly extended through the intersection). In \cite[Theorem 3.1]{HuzakKristiansen}, one can find a criterion for the existence of limit cycles of the linearly regularized system produced by sliding cycles. The criterion is expressed in terms of simple zeros of the slow divergence integral. For more details, we refer the reader to \cite{HuzakKristiansen,HuzakKristiansen2}. 

The main purpose of \cite{HuzakKristiansen3}, which is a natural continuation of \cite{HuzakKristiansen}, was to introduce the notion of slow divergence integral for the other fold-fold singularities of sliding type VV$_1$, VI$_2$, II$_1$ (see \cite{KGR} or \cite[Figure 2.2]{HuzakKristiansen3}), one-sided tangency points with sliding, etc. 

The papers \cite{HuzakKristiansen,HuzakKristiansen2,HuzakKristiansen3} deal with fold-fold singularities of \emph{sliding type}. In this paper, we use the slow divergence integral to study limit cycles in regularized fold-fold singularities of \emph{crossing type}, and we focus our analysis in the II$_2$ (which is a generic singularity, see \cite{KGR}) and non-smooth center (which is non generic, see \cite{BuzziCarvalhoTeixeira} and Figure \ref{fig-intro-crossing}(a)) cases. One can also expect that the ideas used in this paper can also be used to study the singularities VV$_2$ and VI$_1$ (see \cite{KGR}).

It is important to remark that this paper concerns limit cycles of the (non-linearly) regularized PSVF, which is $C^{\infty}$-smooth. The limit cycles inside the regularization stripe shrink to the switching manifold as $\varepsilon\rightarrow 0$. However, we show that non-linear regularizations produce more limit cycles than the ST regularization, even if the PSVF $Z$ is piecewise linear. Indeed, it is shown in Theorems \ref{thm-center} and \ref{thm-ii2} below that piecewise linear vector fields can produce $k$ limit cycles under non-linear regularization, for suitable monotonic transition function $\varphi_{k}$. Observe that a similar result was proved in \cite{HuzakKristiansen} for linear regularization, but the PSVF considered was quadratic.

We deal with non-linear regularizations because one of our goals is to generate more limit cycles in regularizations of invisible-invisible fold-fold singularities than \cite{BLS, KH}. See Theorems \ref{thm-center} and \ref{thm-ii2} below. With this framework, the non-linearly regularized vector field presents a slow-fast Hopf point of the associated slow-fast system (see Section \ref{subsec-hopf}). Observe that in \cite{PRS} the authors proved that, by dropping the monotonicity condition in the Sotomayor--Teixeira regularization, it is possible to generate a (planar) slow-fast jump point (often called SF-generic fold). 

\begin{figure}[ht]\center{
\begin{overpic}[width=0.8\textwidth]{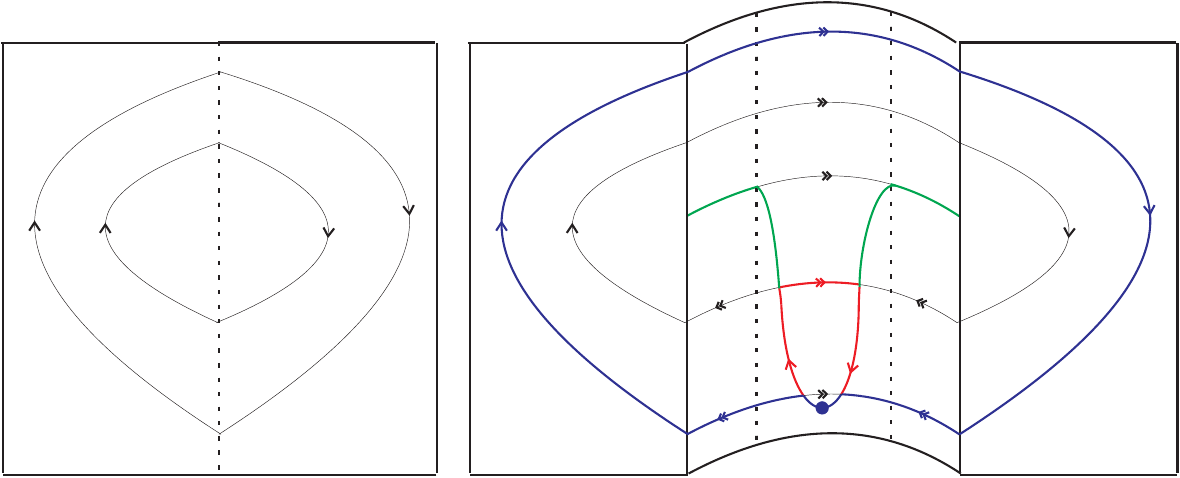}
\put(17,-3){(a)}
\put(67,-3){(b)}
\put(70,22){$C_0$}
\end{overpic}}
\caption{\footnotesize{(a) A nonsmooth center (Case II can present a center-like behavior, see \cite{BuzziCarvalhoTeixeira}). (b) Dynamics on the
blow-up cylinder using a non-linear regularization of the nonsmooth center. The curve of singularities $C_0$ contains a normally attracting branch, a normally repelling branch and a slow-fast Hopf point between them. Two different types of canard cycles (red and blue).}}
\label{fig-intro-crossing}
\end{figure}

The presence of a slow-fast Hopf point (after non-linear regularization and blow-up) plays an important role. The slow-fast Hopf point can generate limit cycles which may grow and become Hausdorff close to canard cycles (see the red graphic in Figure \ref{fig-intro-crossing}(b)). As the size of the canard cycle increases, one can expect limit cycles bifurcating from the blue canard cycle in Figure \ref{fig-intro-crossing}(b). The goal of our paper is to study the number of limit cycles produced by these two types of canard cycles. See Theorem \ref{theo-main} in Section \ref{sec-terminal-case}.

Theorem \ref{theo-main} is a crucial result for prove Theorems \ref{thm-center} and \ref{thm-ii2}. More precisely, we give a simple criterion (expressed in terms of the slow divergence integral) for upper bounds and the existence of limit cycles produced by the blue canard cycle. Roughly speaking, if the slow divergence integral has a zero of multiplicity $k$, then the canard cycle can produce at most $k+1$ limit cycles (see Theorem \ref{theo-main}(a)). Simple zeros of the slow divergence integral correspond to hyperbolic limit cycles for unbroken breaking parameter (see Theorem \ref{theo-main}(b)). Theorem \ref{theo-main} is stated and proven in a more general setting with two folds (not necessarily of invisible type), two singularities as tangency points or their combination (see Subsection \ref{subsec-fold-points}). We refer to Figure \ref{fig-limit-cycles}(a) in Section \ref{sec-terminal-case}. A similar criterion has been proven for the red canard cycle in smooth planar slow-fast systems (see e.g. \cite{Dumortier}).

Theorem \ref{theo-dodging} in Section \ref{section-dodging-case} deals with canard cycles in case of a dodging layer, given in Figure \ref{fig-limit-cycles}(b). Such canard cycles can produce at most $2$ limit cycles and a simple zero of the slow divergence integral corresponds to a codimension $1$ bifurcation of limit cycles (saddle-node bifurcation). To the best of our knowledge, this is the first time that canard cycles of dodging layer appears in regularized piecewise smooth vector field. In the smooth setting, canard cycles with a dodging layer have been studied in \cite{DMDR,JJR1,JJRS}.


This paper is structured as follows. In Section \ref{section-definitions} we present the main tools, such as piecewise smooth vector fields, regularizations and we state Theorems \ref{thm-center} and \ref{thm-ii2}. Section \ref{section-model-def} is devoted to define assumptions in the model that will be studied throughout this paper. We precisely define the types of limit periodic sets that we are interested in Section \ref{sec-candidates}. Theorems \ref{thm-center}, \ref{thm-ii2} and \ref{theo-main} are proven in Section \ref{sec-terminal-case} and in Section \ref{section-dodging-case} we prove Theorem \ref{theo-dodging}. In this paper, continuous combinations, transition functions, etc., are ($C^\infty$-)smooth. The main reason is that we often refer to \cite{DMDR} where planar slow-fast systems have been studied in the smooth setting.



\section{Preliminary definitions and results}\label{section-definitions}

Piecewise smooth vector fields are widely adopted to model phenomena of many branches of applied sciences \cite{bernardo,Filippov}. A closed set with empty interior $\Sigma$ divides the phase space in finitely many open sets, and on each open set is defined a smooth vector field. In this paper we suppose that the straight line $\Sigma = \{x = 0\}$ divides the phase plane in two open regions, and the smooth vector fields $X$ and $Y$ are defined in the regions $\{x > 0\}$ and $\{x < 0\}$, respectively. In what follows we precisely define this framework using \emph{non-linear regularizations} \cite{NovaesJeffrey, SilvaSarmientoNovaes}.

\subsection{Continuous combinations and piecewise smooth vector fields}\label{section-continuous-comb}
A \emph{continuous combination} is a vector field depending on a parameter $\lambda$ written as
\begin{equation*}
\widetilde{Z}_{\mu}(\lambda,x,y) = \big{(}\widetilde{Z}_{1,\mu}(\lambda,x,y), \widetilde{Z}_{2,\mu}(\lambda,x,y)   \big{)},    
\end{equation*}
with $\lambda \in \mathbb{R}$, $(x,y)\in U\subset\mathbb{R}^{2}$, $U$ is an open set and $\mu\in\mathbb{R}^{l}$ denotes finitely many parameters $\mu = (\mu_{1},\dots,\mu_{l})$. Although we assume  $\widetilde{Z}$ to be smooth with respect to $(\lambda,x,y,\mu)$, we will keep the terminology \emph{continuous} for two reasons. Firstly, we want to be in accordance with the terminology adopted in \cite{NovaesJeffrey,SilvaSarmientoNovaes}. Secondly, being \emph{continuous} extends the notion of being \emph{convex} in a sense that we will precise soon.

Define the smooth vector fields
\begin{equation*}
\begin{array}{ccccc}
X_{\mu}(x,y) & = & \big{(}X_{1,\mu}(x,y), X_{2,\mu}(x,y)\big{)} & := & \widetilde{Z}_{\mu}(1,x,y), \\
Y_{\mu}(x,y) & = & \big{(}Y_{1,\mu}(x,y), Y_{2,\mu}(x,y)\big{)} & := & \widetilde{Z}_{\mu}(-1,x,y).
\end{array}
\end{equation*}

A piecewise smooth vector field (PSVF for short) is defined as
\begin{equation*}
Z_{\mu}(x,y) = \widetilde{Z}_{\mu}\Big{(}\operatorname{sgn}\big{(}F(x,y)\big{)},x,y\Big{)},
\end{equation*}
in which $F:U\subset\mathbb{R}^{2}\rightarrow \mathbb{R}$. It is straightforward to verify that $Z_{\mu}(x,y) = X_{\mu}(x,y)$ on $\{F(x,y) > 0\}$ and $Z_{\mu}(x,y) = Y_{\mu}(x,y)$ on $\{F(x,y) < 0\}$. One may also denote $Z_{\mu} = (X_{\mu},Y_{\mu})$ in order to stress the dependency of $Z_{\mu}$ on the smooth vector fields $X_{\mu}$ and $Y_{\mu}$. The set $\Sigma = \{F(x,y) = 0\}$ is called \emph{switching set}, and in the case where $\Sigma$ is a manifold we call it \emph{switching manifold}.

Conversely, any given PSVF gives rise to many associated combinations. A well-known example of continuous combination is
\begin{equation}\label{eq-convex-combination}
\widetilde{Z}_{\mu}(\lambda,x,y) = \frac{1 + \lambda}{2}X_{\mu}(x,y) + \frac{1 - \lambda}{2}Y_{\mu}(x,y),
\end{equation}
which was called \emph{convex combination} in \cite{NovaesJeffrey, SilvaSarmientoNovaes}. Replacing $\lambda$ by $\operatorname{sgn}\big{(}F(x,y)\big{)}$ in equation \eqref{eq-convex-combination}, one obtains a PSVF as studied in \cite{GST}. In short, the idea of the definition of continuous combination is to allow any curve to connect the points $X_{\mu}(p)$ and $Y_{\mu}(p)$, whereas in the convex case a line segment connects such points. See Figure \ref{fig-continuous-comb}.

\begin{figure}[ht]\center{
\begin{overpic}[width=0.3\textwidth]{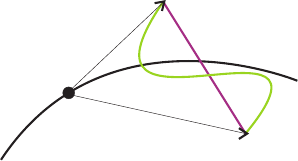}
\put(12,20){$p$}
\put(55,55){$X_{\mu}(p)$}
\put(85,5){$Y_{\mu}(p)$}
\put(102,20){$\Sigma$}
\end{overpic}}
\caption{\footnotesize{Convex and continuous combinations of $X_{\mu}$ and $Y_{\mu}$.}}
\label{fig-continuous-comb}
\end{figure}

The \emph{Lie-derivative} of $F$ with respect to the vector field $X_{\mu}$ is given by $X_{\mu}F = \langle X_{\mu},\nabla F \rangle$ and $X_{\mu}^{i}F = \langle X_{\mu},\nabla X_{\mu}^{i-1}F \rangle$ for all integers $i\geq 2$. This allows us to define the following regions in $\Sigma$:
\begin{enumerate}
  \item \emph{Sewing region}: $\Sigma^{w} = \big{\{}(x,y)\in\Sigma \ ; \ X_{\mu}F  Y_{\mu}F  > 0\big{\}}$,
  \item \emph{Sliding region}: $\Sigma^{s} = \big{\{}(x,y)\in\Sigma\ ; \  X_{\mu}F  Y_{\mu}F  < 0\big{\}}$.
\end{enumerate}

Following Filippov's convention \cite{Filippov}, one can define a vector field in $\Sigma^{s}\subset\Sigma$. The \emph{Filippov sliding vector field} associated to $Z_{\mu}$ is the vector field $Z_{\mu}^{\Sigma}:\Sigma^s\rightarrow T\Sigma$ given by
\begin{equation*}
Z_{\mu}^{\Sigma}(x,y) = \displaystyle\frac{1}{Y_{\mu}F - X_{\mu}F}\Big{(}X_{\mu}  Y_{\mu}F - Y_{\mu}  X_{\mu}F\Big{)}.
\end{equation*}

A point $\mathbf{p}_{0}\in \Sigma$ is called a \textit{tangency point} if $X_{\mu}F(\mathbf{p}_{0}) = 0$ or $Y_{\mu}F(\mathbf{p}_{0}) = 0$.
A point $\mathbf{p}_{0}\in \Sigma$ is a \textit{fold point of $X_{\mu}$} if $X_{\mu}F(\mathbf{p}_{0}) = 0$ and $X_{\mu}^{2}F(\mathbf{p}_{0})\neq 0$. If $X_{\mu}^{2}F(\mathbf{p}_{0}) > 0$, $\mathbf{p}_{0}$ is a \textit{visible fold of $X_{\mu}$} and if $X_{\mu}^{2}F(\mathbf{p}_{0}) < 0$ we say that $\mathbf{p}_{0}$ is an \textit{invisible fold of $X$}. Fold points of $Y_{\mu}$ are defined in an analogous way, however $\mathbf{p}_{0}$ is visible if $Y_{\mu}^{2}F(\mathbf{p}_{0}) < 0$ and invisible if $Y_{\mu}^{2}F(\mathbf{p}_{0}) > 0$. If $\mathbf{p}_{0}$ is a fold of both $X_{\mu}$ and $Y_{\mu}$ simultaneously, then $\mathbf{p}_{0}$ is called \textit{fold-fold singularity of $Z_{\mu}$}. This singularity can be classified into three types: visible-visible, visible-invisible and invisible-invisible. See Figure \ref{fig-fold-fold}.

\begin{figure}[ht]\center{
\begin{overpic}[width=0.4\textwidth]{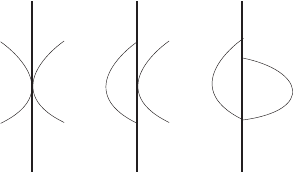}
\put(10,61){$\Sigma$}
\put(46,61){$\Sigma$}
\put(81,61){$\Sigma$}
\end{overpic}}
\caption{\footnotesize{Fold-fold singularities. From the left to the right: visible-visible, visible-invisible and invisible-invisible.}}
\label{fig-fold-fold}
\end{figure}

\subsection{Regularizations of piecewise smooth vector fields}\label{section-regularization}

We say that $\varphi:\mathbb{R}\rightarrow\mathbb{R}$ is a \emph{transition function} if the following conditions are satisfied: \textbf{(1)} $\varphi$ is smooth; \textbf{(2)} $\varphi(t) = -1$ if $t \leq -1$ and $\varphi(t) = 1$ if $t \geq 1$. The transition function is \emph{monotonic} if it satisfies
\textbf{(3)} $\varphi'(t) > 0$ if $s\in (-1,1)$.


Let $\widetilde{Z}_{\mu}$ be a continuous combination associated with the PSVF $Z_{\mu}$.  A \emph{$\varphi$ non-linear regularization of $Z_{\mu}$} is the $(\varepsilon,\mu)$-family of smooth vector fields given by
\begin{equation}\label{eq-def-regularization}
\widetilde{Z}^{\varphi}_{\varepsilon,\mu}(x,y):= \widetilde{Z}_{\mu}\Big{(}\varphi\Big{(}\frac{F(x,y)}{\varepsilon}\Big{)},x,y\Big{)}.    
\end{equation} 

If $\widetilde{Z}_{\mu}$ is a convex combination, we say that $\widetilde{Z}^{\varphi}_{\varepsilon,\mu}$ is a \emph{$\varphi$-linear regularization of $Z_{\mu}$}. When $\widetilde{Z}_{\mu}$ is convex and $\varphi$ is monotonic, \eqref{eq-def-regularization} is the well-known \emph{Sotomayor--Teixeira regularization} \cite{SotoTeixeira}. We keep the superscript $\varphi$ in the notation $\widetilde{Z}^{\varphi}_{\varepsilon,\mu}$ to emphasize the dependency of the regularization on the transition function. Indeed, different transition functions lead to different dynamics of the regularized vector field (see \cite{HuzakKristiansen, PanazzoloSilva, PRS}).

In \cite[Theorem 1]{NovaesJeffrey} the authors proved the following result. \emph{Let $\varphi$ and $\psi$ be monotonic and non-monotonic transition functions, respectively. If $\widetilde{Z}^{\psi}_{\varepsilon,\mu}$ is a $\psi$-linear regularization, then there exists a unique non-linear regularization $\widetilde{W}^{\varphi}_{\varepsilon,\mu}$ such that $\widetilde{Z}^{\psi}_{\varepsilon,\mu} = \widetilde{W}^{\varphi}_{\varepsilon,\mu}$}. However, in general the converse is not true (see \cite[Theorems B and C]{PRS}).

From now on, we consider a coordinate system such that $F(x,y) = x$, therefore $\Sigma = \{x = 0\}$. Moreover, we deal with non-linear regularizations in order to obtain richer phenomena, namely \emph{slow-fast Hopf points} (and generic turning points). Indeed, such singularities do not appear in linear regularizations, even if we consider non monotonic transition functions (see Section \ref{subsec-hopf}).

\subsection{Statement of results}\label{section-state-resul}

We are interested in limit cycles of non-linear regularizations as in Figure \ref{fig-intro-crossing}. Such limit cycles can be ``large'' or ``small'', as highlighted in blue or red in Figure \ref{fig-intro-crossing}, respectively. 

We emphasize that the PSVF's in Theorems \ref{thm-center} and \ref{thm-ii2} are piecewise linear. In addition, in this paper we do not deal with crossing limit cycles, but we are interested in limit cycles of the non-linearly regularized vector field (which is $C^{\infty}$-smooth). If a limit cycle is located inside the regularization stripe, it shrinks to the switching manifold as $\varepsilon \rightarrow 0$. 

In what follows, the \emph{degree} of a continuous combination is the degree of $\widetilde{Z}_{\mu}$ with respect to the $\lambda$ variable. In Theorems \ref{thm-center} and \ref{thm-ii2}, the parameter $\mu$ is one dimensional. Moreover, we say that an invisible invisible fold-fold singularity is a \emph{codimension 1 non-smooth focus} if the first return map $\xi$ of $Z_{\mu}$ is given by $\xi(y) = y + ay^{2} + O(y^{3})$, with $a \neq 0$. The invisible invisible fold-fold singularity is a \emph{non-smooth center} if the first return map $\xi$ of $Z_{\mu}$ is given by $\xi(y) = y$. The definition of such singularities is also recalled in detail in section \ref{sec-candidates}.

\begin{mtheorem}\label{thm-center}
There exists a continuous combination $\widetilde{Z}_\mu$ of degree $2$ associated with a linear PSVF $Z_\mu$ having a non-smooth center such that the following is true: for any integer $k > 0$, there exist a monotonic transition function $\varphi_k$ and a continuous function $\mu_k:[0,\varepsilon_k]\to \mathbb{R}$, with $\varepsilon_k>0$, such that the non-linear regularization $\widetilde{Z}^{\varphi_k}_{\varepsilon,\mu_k(\varepsilon)}$ has at least $k+1$ hyperbolic limit cycles, for each $\varepsilon\in (0,\varepsilon_k]$.
\end{mtheorem}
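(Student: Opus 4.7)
The plan is to reduce the theorem to constructing a monotonic transition function $\varphi_k$ for which the slow divergence integral (SDI) along the associated canard family has at least $k+1$ simple zeros; Theorem~\ref{theo-main}(b) then produces, for each simple zero, a hyperbolic limit cycle of the regularized system at $\varepsilon>0$ along a canard curve $\mu=\mu_k(\varepsilon)$ with $\mu_k(0)=0$. I propose to take the piecewise linear PSVF $X(x,y)=(y,-1)$ on $\{x>0\}$ and $Y(x,y)=(y,1)$ on $\{x<0\}$: both folds at the origin are invisible and the first return map is the identity, so $Z=(X,Y)$ is a non-smooth center (independent of $\mu$). For the degree-$2$ continuous combination, take
$$\widetilde Z_\mu(\lambda,x,y)=\bigl(y+(1-\lambda^2)(1+y),\ -\lambda+(1-\lambda^2)\mu\bigr),$$
whose values at $\lambda=\pm1$ recover $X,Y$. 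After non-linearly regularizing and blowing up $x=\varepsilon\bar x$, the slow-fast system at $\mu=0$ has critical manifold $C_0=\{y=(\varphi^2-1)/(2-\varphi^2)\}$, which is ``U''-shaped, attracting on $\bar x>0$, repelling on $\bar x<0$, and admits a slow-fast Hopf point at $(\bar x,y)=(0,-1/2)$ across which the slow dynamics $\dot{\bar x}=-(2-\varphi^2)^2/(2\varphi')$ extends smoothly.

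Parametrize the canard cycles by $\alpha=\varphi(\bar x_+)\in(0,1)$, the matching repelling endpoint $\bar x_-$ being determined by $\varphi(\bar x_-)=-\alpha$. Computing the SDI along the slow segment using the normal divergence $-2\varphi\varphi'/(2-\varphi^2)$ and slow-time element $-2\varphi'\,d\bar x/(2-\varphi^2)^2$, then substituting $u=\varphi(\bar x)$, I get
$$I(\alpha)=-4\int_{-\alpha}^{\alpha}\frac{u\,h(u)}{(2-u^2)^3}\,du=-8\int_{0}^{\alpha}\frac{u\,h_o(u)}{(2-u^2)^3}\,du,$$
where $h(u):=\varphi'(\varphi^{-1}(u))>0$ and $h_o(u):=\tfrac12(h(u)-h(-u))$ is its odd part; the even part drops out because $u/(2-u^2)^3$ is odd in $u$. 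Thus the SDI depends only on the asymmetric component $h_o$ of the transition function, which is the free handle we will exploit.

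To construct $\varphi_k$, fix $0<\alpha_1<\dots<\alpha_{k+1}<1$ and let $Q_k(u)=\prod_{j=1}^{k+1}(u^2-\alpha_j^2)$ (even in $u$). Aiming for the target $I(\alpha)=-8\delta\,\alpha^3 Q_k(\alpha)$, differentiation of the integral formula for $I$ forces the explicit closed form
$$h_{o,k}(u):=\delta\,u\bigl(3Q_k(u)+uQ_k'(u)\bigr)(2-u^2)^3,$$
which is an odd polynomial (since $Q_k$ is even and $Q_k'$ odd). Pick $\delta>0$ small enough that $|h_{o,k}|<\tfrac12$ on $[-1,1]$, and choose an even positive smooth function $h_{e,k}$ via a standard plateau-bump template so that $h_k:=h_{e,k}+h_{o,k}$ is positive, satisfies $\int_{-1}^{1}du/h_k(u)=2$, and equals a positive constant near $u=\pm1$. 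The unique monotonic $C^\infty$ transition function $\varphi_k$ determined by $\varphi_k'\circ\varphi_k^{-1}=h_k$ has, by construction, SDI exactly equal to $-8\delta\,\alpha^3Q_k(\alpha)$ (independent of the choice of even part $h_{e,k}$), with $k+1$ simple zeros at $\alpha_1,\dots,\alpha_{k+1}$ in $(0,1)$. Theorem~\ref{theo-main}(b) then upgrades each of these into a hyperbolic limit cycle of $\widetilde Z^{\varphi_k}_{\varepsilon,\mu_k(\varepsilon)}$ for every $\varepsilon\in(0,\varepsilon_k]$, with $\mu_k$ continuous and $\mu_k(0)=0$, yielding at least $k+1$ hyperbolic limit cycles.

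The main nontrivial step will be to verify that the SDI formula derived above indeed matches the object tested in Theorem~\ref{theo-main}(b) — i.e.\ that the present canard family (parametrized by $\alpha$) is a limit periodic set of the type handled there, with the correct identification of the breaking parameter $\mu$ — and to check that the closed-form $h_{o,k}$ together with a suitably chosen even part $h_{e,k}$ simultaneously realize positivity of $h_k$, the normalization $\int 1/h_k=2$, and the smooth matching to $\pm1$ outside $[-1,1]$, which is routine but needs an explicit template.
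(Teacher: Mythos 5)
Your overall strategy is the paper's: reduce the theorem to producing simple zeros of the slow divergence integral and invoke Theorem \ref{theo-main}(b), and your key observation that only the odd part of $h=\varphi'\circ\varphi^{-1}$ enters the integral is exactly the mechanism the paper exploits (there, an even perturbation $\widetilde\varphi(x)=x+\delta\varphi_e(x)$ of the identity produces the odd part of $h$ to first order in $\delta$, and the integral $4\int_x^{L(x)}\varphi(s)(\varphi'(s))^2ds$ becomes $-8\delta\int_0^xs\varphi_e'(s)ds+O(\delta^2)$, whose zeros are prescribed by an explicit polynomial). However, two concrete gaps remain. First, your continuous combination $\widetilde Z_\mu(\lambda,x,y)=\bigl(y+(1-\lambda^2)(1+y),\,-\lambda+(1-\lambda^2)\mu\bigr)$ does not belong to the class \eqref{eq-combination-hopf} for which Theorem \ref{theo-main} is proved: the $\lambda$-dependent correction to the first component depends on $y$, whereas the model requires $\widetilde Z_{1,\mu}=y-\lambda^2+A_\mu(\lambda,x)$ with $A_\mu$ independent of $y$ and of the prescribed form in $(\lambda,x)$. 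You flag this verification as "the main nontrivial step" but do not carry it out, so the reduction to Theorem \ref{theo-main}(b) is unjustified as written. It is also avoidable: the paper takes $X_\alpha=(y-1,\alpha-1)$, $Y_\alpha=(y-1,\alpha+1)$ with combination $(y-\lambda^2,\alpha-\lambda)$, which is \eqref{eq-combination-hopf} with $A\equiv B\equiv 0$, satisfies \textbf{(A0)}--\textbf{(A3)} for any monotonic $\varphi$ with $\varphi(0)=0$, $\varphi'(0)>0$, and produces the same type of integral without your extraneous weight $(2-u^2)^{-3}$.

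Second, your template for $\varphi_k$ is not a transition function in the paper's sense. A smooth $\varphi$ with $\varphi\equiv\pm1$ on $|t|\ge1$ must be infinitely flat at $t=\pm1$, hence $h(u)=\varphi'(\varphi^{-1}(u))$ must tend to $0$ as $u\to\pm1$; demanding that $h_k$ equal a positive constant near $u=\pm1$ makes $\varphi_k'$ jump at $t=\pm1$, so $\varphi_k$ is not $C^\infty$. (There is also the unaddressed normalization $\int_{-1}^{0}du/h_k=1$ needed to get $\varphi_k(0)=0$, i.e.\ \textbf{(A0)}; the condition $\int_{-1}^{1}du/h_k=2$ alone does not give it, since $h_k$ is not even.) The paper sidesteps both issues by localizing the modification to a small interval $[-2\nu,2\nu]$, placing the prescribed zeros in $(0,\nu)$, and gluing to the fixed smooth monotonic reference $\psi(x)=\tanh\bigl(x/(1-x^2)\bigr)$ with a bump function. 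Your exact (non-perturbative) prescription of $h_o$ is an attractive variant, but you must either make $h_k$ vanish flatly at $u=\pm1$ or localize and glue as the paper does. Finally, note that you construct $k+1$ simple zeros where $k$ suffice: Theorem \ref{theo-main}(b) already supplies one additional (smallest) hyperbolic cycle through the breaking-parameter mechanism.
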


The linear PSVF and continuous combination from Theorem \ref{thm-center} are given in \eqref{eq-II2-ns-center} and \eqref{eq-II2-center-cc}, respectively. We have a similar result for the singularity II$_{2}$.

\begin{mtheorem}\label{thm-ii2}
There exists a continuous combination $\widetilde{Z}_\mu$ of degree at least $4$ associated with a linear PSVF $Z_\mu$ having a codimension $1$ non-smooth focus such that the following is true: for any integer $k > 0$, there exist a monotonic transition function $\varphi_k$ and a continuous function $\mu_k:[0,\varepsilon_k]\to \mathbb{R}$, with $\varepsilon_k>0$, such that the non-linear regularization $\widetilde{Z}^{\varphi_k}_{\varepsilon,\mu_k(\varepsilon)}$ has at least $k+1$ hyperbolic limit cycles, for each $\varepsilon\in (0,\varepsilon_k]$.    
\end{mtheorem}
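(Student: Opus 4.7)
The plan is to follow the same architecture as in the proof of Theorem \ref{thm-center}, but replace the non-smooth center by a piecewise linear $Z_\mu$ realizing a codimension $1$ non-smooth focus, and upgrade the degree of the continuous combination from $2$ to $4$ so as to compensate for the loss of symmetry in the first return map. Concretely, I would fix piecewise linear fields $X_\mu(x,y)$ on $\{x>0\}$ and $Y_\mu(x,y)$ on $\{x<0\}$ whose common origin is an invisible-invisible fold-fold with first return map $\xi_\mu(y)=y+\mu\,y+a y^{2}+O(y^3)$, $a\neq 0$; here $\mu$ is the breaking parameter that unfolds the pseudo-focus. I would then write a polynomial continuous combination
$$
\widetilde Z_\mu(\lambda,x,y)=\sum_{j=0}^{4}\lambda^{j}A_j(x,y;\mu),
$$
with $\widetilde Z_\mu(1,\cdot)=X_\mu$, $\widetilde Z_\mu(-1,\cdot)=Y_\mu$, and with $A_2,A_3,A_4$ chosen so that after the cylindrical blow-up of $\Sigma=\{x=0\}$, as in Section \ref{sec-terminal-case}, the associated family admits a slow-fast Hopf point on the singular curve $C_0$ of the exact type treated in Theorem \ref{theo-main}.

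Next, for each prescribed $k$ I would study the slow divergence integral $I_{\varphi}(y)$ along the slow segment of the blown-up canard cycle $\widehat\Gamma_y$ (the blue cycle of Figure \ref{fig-intro-crossing}(b)), which by the construction in Section \ref{sec-terminal-case} is a smooth functional of the monotonic transition $\varphi$, of the coefficients of $A_0,\dots,A_4$, and of $y$. Since degree $4$ provides three independent free coefficients beyond those fixed by $X_\mu$ and $Y_\mu$, and since $\varphi$ itself is an infinite-dimensional free parameter in the space of monotonic transitions, I would argue that the map $\varphi \mapsto I_{\varphi}$ has dense image in a suitable $C^0$-neighborhood of functions vanishing at $y=0$. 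Using this flexibility I would exhibit a specific monotonic $\varphi_k$ for which $I_{\varphi_k}$ has $k$ simple zeros $0<y_1<\cdots<y_k$ on a chosen slow interval, staying away from the Hopf point.

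With $\varphi_k$ fixed, I would invoke Theorem \ref{theo-main}(b): each simple zero $y_i$ of $I_{\varphi_k}$ yields, for $\varepsilon>0$ small and a suitable value $\mu_k(\varepsilon)$ of the breaking parameter close to the value that keeps the Hopf point in canard regime, one hyperbolic limit cycle of $\widetilde Z^{\varphi_k}_{\varepsilon,\mu_k(\varepsilon)}$ Hausdorff-close to $\widehat\Gamma_{y_i}$. The function $\mu_k$ is produced by the implicit function theorem applied to the equation that makes the slow-fast Hopf point lie at the correct location for unbroken canard. This gives $k$ hyperbolic cycles. A $(k{+}1)$-th hyperbolic cycle comes from the slow-fast Hopf bifurcation itself (the small red cycle of Figure \ref{fig-intro-crossing}(b)): for $\varepsilon>0$ the Hopf point of the blown-up family unfolds into a hyperbolic small limit cycle that is topologically separated from the $\widehat\Gamma_{y_i}$'s, so the total count is at least $k+1$.

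The main obstacle will be step (v), i.e.\ showing that degree $4$ together with the single scalar freedom in $\varphi$ (compatible with monotonicity on $[-1,1]$) suffices to prescribe $k$ simple zeros of $I_{\varphi}$ for \emph{any} $k$, despite the rigid constraint $\xi_\mu(y)=y+\mu y+ay^{2}+O(y^3)$ with $a\neq 0$ that is inherited from the codimension $1$ non-smooth focus. In the center case (Theorem \ref{thm-center}) the identity first return map forces a symmetry in the leading part of $I_\varphi$ which keeps the analysis two-dimensional in the combination coefficients; here the nontrivial $ay^2$ asymmetry obstructs that reduction and forces the use of the $\lambda^3,\lambda^4$ terms to restore enough independence among the coefficients of the Taylor expansion of $I_{\varphi}(y)$ around $y=0$. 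I expect this to reduce, as in the center case, to a moment-type density argument for the class of derivatives $\varphi'$ of monotonic transitions, which I would carry out by convolving a suitable base transition with bump functions supported on $(-1,1)$, perturbing the zeros of $I_{\varphi}$ one by one and concluding by a standard transversality/Rolle argument that the zeros can be made simple and non-interacting.
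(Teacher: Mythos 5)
Your plan diverges from the paper's proof at the decisive point, and the divergence opens a genuine gap. You propose to work with the \emph{terminal} (blue) canard cycles $\Upsilon^{\tilde y}$ and to invoke Theorem \ref{theo-main}(b), which requires producing $k$ simple zeros of $J_{\widetilde\mu}(y)=J^+_{\widetilde\mu}(y)-J^-_{\widetilde\mu}(y)$, where by \eqref{eq-integrals-proof} the two branches are composed with the half-return maps $\xi_X$ and $\xi_Y$. For the codimension~$1$ pseudo-focus, $\xi=\xi_Y^{-1}\circ\xi_X$ is \emph{not} the identity (its quadratic coefficient is $-2$ in the model \eqref{eq-cod1-ii-psvf}), so the landing-point relation $L(x)$ is no longer determined by $F(x)=F(L(x))$ alone, and the perturbative cancellation that drives the center case (the zeroth-order term $\int_x^{-x}s\,ds=0$) is destroyed. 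You acknowledge this as ``the main obstacle'' and then assert, without proof, that a ``moment-type density argument'' for $\varphi\mapsto I_\varphi$ will fix it. That assertion is exactly what is missing: the paper itself declines to prove the terminal-case version for II$_2$, stating explicitly that it ``is more technical and it is a topic of further study.'' As written, your argument therefore does not close.

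The paper's actual route sidesteps the asymmetry entirely. It chooses the degree-$\ge 4$ combination \eqref{eq-cod1-ii-combination} so that the correction terms $A_\alpha$ and $B_\alpha$ carry explicit factors of $x$ or $\alpha$ and hence vanish on the blow-up locus; consequently $F(x)=\varphi^2(x)$ and $G(x)=-\varphi(x)$, identical to the center case. It then works with the \emph{red} canard cycles of Figure \ref{fig-intro-crossing}(b), which live entirely inside the scaling chart; by Remark \ref{remark-limit-cycles} their slow divergence integral is $I(x)=4\int_x^{L(x)}\varphi(s)(\varphi'(s))^2ds$ with $L$ defined by $F(x)=F(L(x))$ --- the half-return maps of $Z_\alpha$ never enter. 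The construction of $\varphi_k$ with $k$ simple zeros is then copied verbatim from the proof of Theorem \ref{thm-center}, and the $(k+1)$-st cycle comes from the control of $\widetilde\alpha(\varepsilon)$ as in \cite{DeMaesschalckHuzak,Dumortier}, not from a separate Hopf cycle ``topologically separated'' from the others. Note also that the role of degree $\ge 4$ is structural (it is needed to realize the $x$-dependent linear PSVF within the form \eqref{eq-combination-hopf} with the correct parities in $\lambda$), not to ``compensate for the asymmetry of the return map'' as you suggest.
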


Concerning Theorems \ref{thm-center} and \ref{thm-ii2}, one should point the following remarks. Firstly, the $k+1$ limit cycles from Theorem \ref{thm-center} are either produced by red canard cycles in Figure \ref{fig-intro-crossing} (thus, each of them shrinks to the switching manifold as $\varepsilon \to 0$) or by blue canard cycles in Figure \ref{fig-intro-crossing}. We refer to the proof of Theorem \ref{thm-center} in Section \ref{application-center}. Secondly, the $k+1$ limit cycles from Theorem \ref{thm-ii2} shrink to the switching manifold as $\varepsilon \to 0$. For more details see the proof of Theorem \ref{thm-ii2} given in Section \ref{application-II2}.


Our strategy is to define a suitable non-linear regularization such that it presents a generic Hopf turning point. Using tools from geometric singular perturbation theory, in Theorem \ref{theo-main} in Section \ref{sec-terminal-case} we give a simple criterion for detecting such limit cycles, in terms of zeros of the slow divergence integrals. Finally, in Sections \ref{application-center} and \ref{application-II2} we show how to construct zeros for the slow divergence integral.

We point out that Theorem \ref{theo-main} (see also Figure \ref{fig-limit-cycles}(a)) is the main result of this paper which works under very general conditions. It is more convenient to state it later, after we introduce the notion of generic (Hopf) turning point, slow divergence integral, among other tools. We also prove Theorem \ref{theo-dodging} in Section \ref{section-dodging-case} for canard cycles of dodging type, see Figure \ref{fig-limit-cycles}(b). Such canard cycles, in contrast to Theorem \ref{theo-main}, can produce at most $2$ limit cycles. Theorem \ref{theo-dodging} also contains a simple criterion in terms of the slow divergence integral for existence of saddle node bifurcation of limit cycles.






\section{Non-linear regularizations with a generic turning point}\label{section-model-def}

In this section, we define a continuous combination whose regularization has a generic turning point positioned at the origin. Such singularity is the subject of subsection \ref{subsec-hopf}, where we also justify our approach by non-linear regularizations.

\subsection{Slow-fast Hopf points}\label{subsec-hopf}

We say that the 2-dimensional (smooth) slow-fast system
\begin{equation}\label{eq-def-slowfast-1}
\left\{
\begin{array}{rcl}
\dot{x} & = & f_{\mu}(x,y,\varepsilon), \\
\dot{y} & = & \varepsilon g_{\mu}(x,y,\varepsilon),
\end{array}
\right.
\end{equation}
has a slow-fast Hopf point positioned at the origin if, for a fixed parameter $\mu = \mu_{0}$, it satisfies \cite[Definition 2.4]{DMDR}
\begin{equation}\label{eq-def-hopf-turning-point}
\begin{split}
    f_{\mu_{0}}(0,0,0) = g_{\mu_{0}}(0,0,0) = \frac{\partial f_{\mu_{0}}}{\partial x}(0,0,0) = 0, \\ 
    \frac{\partial^{2} f_{\mu_{0}}}{\partial x^{2}}(0,0,0) \neq 0, \ \quad \ \left(\frac{\partial g_{\mu_{0}}}{\partial x}(0,0,0)\right)\left( \frac{\partial f_{\mu_{0}}}{\partial y}(0,0,0)\right) < 0. \
\end{split}
\end{equation}

Following \cite[Section 6.1]{DMDR}, a normal form for smooth equivalence for a slow-fast Hopf point is given by
\begin{equation}\label{sfHopfNormal}
\left\{
\begin{array}{rcl}
\dot{x} & = & y - x^{2} + x^{3}h_{1}(x,\varepsilon,\mu), \\
\dot{y} & = & \varepsilon\Big{(}a(\mu) - x + x^{2}h_{2}(x,\varepsilon,\mu) + yh_{3}(x,y,\varepsilon,\mu)\Big{),}
\end{array}
\right.
\end{equation}
where functions $a,h_1,h_2,h_3$ are smooth and $a(\mu_0)=0$. It is straightforward to see that system \eqref{sfHopfNormal} satisfies conditions \eqref{eq-def-hopf-turning-point}. The slow-fast Hopf point in \eqref{sfHopfNormal} is called a generic turning point if the function $\mu\mapsto a(\mu)$ is
a submersion at $\mu=\mu_0$. In this case we can take $\alpha=a(\mu)$ as a new independent parameter. The parameter $\alpha$ is called a breaking parameter and plays an important role when we want to create limit cycles of \eqref{sfHopfNormal}. We refer to \cite[Section 6.3]{DMDR} and later sections for more details. In particular, if $h_{2} \equiv h_{3} \equiv 0$ in \eqref{sfHopfNormal}, one obtains a slow-fast classical Liénard equation as studied in \cite{DeMaesschalckHuzak,DPR}. 


It is well-known that, when considering a regularized vector field as \eqref{eq-def-regularization}, after rescaling of the form $x = \varepsilon\widetilde{x}$ and multiplication by $\varepsilon$ one obtains a slow-fast system of the form \eqref{eq-def-slowfast-1}. More specifically, given a PSVF $Z_{\mu} = (X_{\mu},Y_{\mu})$ and applying the previous transformation and multiplication in a \emph{linear} regularization, one obtains the system
(dropping the tilde to simplify the notation)
\begin{equation}\label{eq-slow-fast-pwsvf-planar}
\left\{
\begin{array}{rcl}
   \dot{x} &=& \displaystyle\frac{X_{1} + Y_{1}}{2} + \varphi(x)\left(\displaystyle\frac{X_{1} - Y_{1}}{2}\right), \\ 
   \dot{y} &= &\varepsilon\left(\displaystyle\frac{X_{2} + Y_{2}}{2} + \varphi(x)\left(\displaystyle\frac{X_{2} - Y_{2}}{2}\right)\right),
\end{array}
\right.
\end{equation}
in which $X = (X_{1},X_{2})$, $Y = (Y_{1},Y_{2})$ are applied in $(\varepsilon x,y)$ (we briefly omitted the parameter $\mu$ for the sake of simplicity). With this configuration, in which the regularization is linear, the dynamics on the half cylinder does not present slow-fast Hopf point, with $\varphi$ being monotonic or not.
Indeed, suppose by contradiction that system \eqref{eq-slow-fast-pwsvf-planar} has a slow-fast Hopf point at the origin. Then one would have
\begin{equation*}
\begin{array}{ccc}
(X_{1} + Y_{1}) + \varphi(0)(X_{1} - Y_{1})  =  (X_{2} + Y_{2}) + \varphi(0)(X_{2} - Y_{2}) & = & 0, \\
\varphi'(0)(X_{1} - Y_{1}) & = & 0, \\
\Big{(}\varphi'(0)(X_{2} - Y_{2})\Big{)}\Big{(}(X_{1} + Y_{1})_{y} + \varphi(0)(X_{1} - Y_{1})_{y}\Big{)} & < & 0,
\end{array}
\end{equation*}
in which all of these functions are evaluated at $(0,0)$ (see Equation \eqref{eq-def-hopf-turning-point}). This would imply $\varphi'(0) = 0$ and $\varphi'(0) \neq 0$ simultaneously, which is a contradiction. We remark that one must require $(X_{1} - Y_{1})(0,0) \neq 0$, otherwise for $(y,\varepsilon)=(0,0)$ all terms $a_{n}x^{n}$ in the Taylor expansion of the first equation of \eqref{eq-slow-fast-pwsvf-planar} would be zero. 

As we will see in next subsection, one can generate this kind of point with non-linear regularizations. This also justifies our approach using such a regularization process.

\subsection{Piecewise smooth model}\label{sec-pws-model}

Motivated by the model \eqref{sfHopfNormal} given in Section \ref{subsec-hopf}, we consider the continuous combination
\begin{equation}\label{eq-combination-hopf}
\widetilde{Z}_{\mu}(\lambda,x,y) = \left\{
  \begin{array}{rcl}
   \widetilde{Z}_{1,\mu}(\lambda,x,y)  & = & y - \lambda^{2} + A_{\mu}(\lambda,x), \\
   \widetilde{Z}_{2,\mu}(\lambda,x,y)  & = & \alpha -\lambda + B_{\mu}(\lambda,x,y).
  \end{array}
\right.
\end{equation}

In Equation \eqref{eq-combination-hopf}, we assume $\mu=(\alpha,\widetilde\mu)$ where the parameter $\alpha$ plays the role of a breaking parameter kept near zero and $\widetilde \mu\in \mathbb{R}^{l-1}$ denotes extra parameters. The functions $A_{\mu}$ and $B_{\mu}$ are given by 
\begin{equation*}
  \begin{array}{rcl}
A_{\mu}(\lambda,x) & = & \displaystyle\sum_{i = 0}^{3}\lambda^{3 - i}x^{i}A_{i,\mu}(\lambda,x), \\
B_{\mu}(\lambda,x,y) & = & \displaystyle\sum_{j = 0}^{2}\lambda^{2 - j}x^{j}B_{j,\mu}(\lambda,x) +  y B_{3,\mu}(\lambda, x,y),
  \end{array}
\end{equation*}
with $A_{i,\mu}$ and $B_{j,\mu}$ being smooth functions for $i,j = 0,\dots,3$. The functions $A_{\mu}$ and $B_{\mu}$ denote higher order terms on the variables $\lambda, x$ and $y$, and they can affect the dynamics of the PSVF $Z_{\mu}$ near $\Sigma = \{x = 0\}$. In this case, the PSVF is given by
\begin{equation}\label{eq-psvf-hopf}
Z_{\mu}(x,y) = \left\{
  \begin{array}{ll}
   X_{\mu}(x,y) =  \big{(}y - 1+ A_{\mu}(1,x), & \alpha - 1 + B_{\mu}(1,x,y)\big{)}, \\
   Y_{\mu}(x,y) =  \big{(}y - 1 + A_{\mu}(-1,x), & \alpha + 1 + B_{\mu}(-1,x,y)\big{)}.
  \end{array}
\right.
\end{equation}

We define a non-linear regularization of $Z_{\mu} = (X_{\mu},Y_{\mu})$  as
\begin{equation}\label{eq-def-regularization-2first}
\widetilde{Z}^{\varphi}_{\varepsilon,\mu}(x,y):= \widetilde{Z}_{\mu}\Big{(}\varphi\Big{(}\frac{x}{\varepsilon}\Big{)},x,y\Big{)},    
\end{equation} 
where $\varphi$ is a transition function (not necessarily monotonic). After rescaling $x = \varepsilon\widetilde{x}$ and multiplication by $\varepsilon$ we get (we drop the tilde in order to simplify the notation)
\begin{equation}\label{eq-blow-up-e1-pomoc}
\left\{
  \begin{array}{rclcl}
    \dot{x}  & = & y - \varphi^{2}(x) +  A_{\mu}(\varphi(x),\varepsilon x), \\
    \dot{y}  & = &  \varepsilon\left(\alpha - \varphi(x)  +  B_{\mu}(\varphi(x),\varepsilon x,y) \right).
  \end{array}
\right.
\end{equation}

In what follows, we will define some important assumptions for our proofs. We first state them, and their role will be explained in the sections below.

Given a transition function $\varphi$, define the functions
\begin{equation}\label{eq-def-f-g}
F_{\widetilde\mu}(x) := \varphi^{2}(x) - A_{(0,\widetilde\mu)}(\varphi(x),0), \quad G_{\widetilde\mu}(x) := B_{(0,\widetilde\mu)}\big{(}\varphi(x),0,F_{\widetilde\mu}(x)\big{)}- \varphi(x).
\end{equation}

For some $0 < M_1, M_2 < 1$, we require the following assumptions for all $x\in [-M_1,M_2]$ and $\mu = \mu_0 = (0,\widetilde\mu_0)$ being a parameter defined as in the beginning of Section \ref{section-model-def}.

\begin{itemize}
    \item[\textbf{(A0)}] The transition function satisfies $\varphi(0)=0$ and $\varphi'(0)>0$.
    \item[\textbf{(A1)}] $A_{\mu_0}(\pm1,0) < 1$.
    \item[\textbf{(A2)}] The function $F$ defined in \eqref{eq-def-f-g} satisfies $\frac{F_{\widetilde\mu_0}'(x)}{x}>0$, for all $x\in [-M_1,M_2]$.
    \item[\textbf{(A3)}] The function $G$ defined in \eqref{eq-def-f-g} satisfies $\frac{G_{\widetilde\mu_0}(x)}{x}<0$, for all $x\in [-M_1,M_2]$.
\end{itemize}

Assumption \textbf{(A0)} and the definition of $A_{\mu}$ and $B_{\mu}$ imply that the slow-fast system \eqref{eq-blow-up-e1-pomoc} has a generic turning point at the origin for $\mu=\mu_0=(0,\widetilde\mu_0)$. Notice that using a rescaling in $(x,y,t)$ the system \eqref{eq-blow-up-e1-pomoc} can be brought, near the origin, into the normal form \eqref{sfHopfNormal}.

Assumptions \textbf{(A0)}, \textbf{(A2)}, \textbf{(A3)} will be relevant in Section \ref{subsec-blow-up-e1} when we define the notion of slow divergence integral. We use \textbf{(A1)} in Section \ref{subsec-fold-points} when we study tangency points. 

\subsection{Tangency points}\label{subsec-fold-points}

Recall that the switching locus is the set $\Sigma = \{x = 0\}$. We are interested in tangency points of the PSVF $Z_{\mu}=(X_{\mu},Y_{\mu})$ given by \eqref{eq-psvf-hopf}. Tangency points of $X_{\mu}$ and $Y_{\mu}$ will be denoted by $T_{\mu}^{X}$ and $T_{\mu}^{Y}$, respectively. It will be clear for the reader that the functions $A_{\mu}$ and $B_{\mu}$ in \eqref{eq-psvf-hopf} determine the position and the (in)visibility of fold points, respectively.

Since $X_{\mu}F=X_{1,\mu}$, the vector field $X_{\mu}=(X_{1,\mu},X_{2,\mu})$ is tangent to $\Sigma$ at points of the form $T_{\mu}^{X} = (0,y_{\mu}^{X})$ with
\begin{equation}\label{eq-tangency-X}
y_{\mu}^{X}=1 - A_{\mu}(1,0), 
\end{equation}

The second order Lie-derivative at $T_{\mu}^{X}$ is given by
$$X^{2}_{\mu} F(T_{\mu}^{X}) = X_{2,\mu}(T_{\mu}^{X}) = \alpha - 1 + B_{\mu}(1,0,y^{X}_{\mu}),$$
where we used $X_{1,\mu}(T_{\mu}^{X})=0$ and $\frac{\partial}{\partial y}X_{1,\mu}(x,y)=1$ (see Equation \eqref{eq-psvf-hopf}). This implies that $T_{\mu}^{X}$ is either a fold point of $X_\mu$ or a singularity of $X_\mu$. 

Analogously, $Y_{\mu}F=Y_{1,\mu}$ and tangency points of $Y_{\mu}$ are of the form $T_{\mu}^{Y} = (0,y_{\mu}^{Y})$, with
\begin{equation}\label{eq-tangency-Y}
y_{\mu}^{Y}=1 - A_{\mu}(-1,0), \quad Y^{2}_{\mu} F(T_{\mu}^{Y})= \alpha + 1 + B_{\mu}(-1,0,y_{\mu}^{Y}),
\end{equation}
therefore the point $T_{\mu}^{Y}$ is either a fold point of $Y_\mu$ or a singularity of $Y_\mu$. 
\smallskip

It can be easily seen that $$\Sigma^{w}=\{y<\min\{y_{\mu}^{X},y_{\mu}^{Y}\}\}\cup \{y>\max\{y_{\mu}^{X},y_{\mu}^{Y}\}\}, \ \ \Sigma^{s}=\{\min\{y_{\mu}^{X},y_{\mu}^{Y}\}<y<\max\{y_{\mu}^{X},y_{\mu}^{Y}\}\},$$
where $\Sigma^{w}$ is the sewing region and $\Sigma^{s}$ is the sliding region (see Section \ref{section-continuous-comb}). 
Notice that $\Sigma^{s}=\emptyset$ when $y_{\mu}^{X}=y_{\mu}^{Y}$.

The assumption \textbf{(A1)} implies that, for a fixed $\mu = \mu_0 = (0,\tilde{\mu}_0)$, the numbers $y_{\mu_0}^{X}$ and $y_{\mu_0}^{Y}$ defined in \eqref{eq-tangency-X} and \eqref{eq-tangency-Y}, respectively, are positive, therefore $T_{\mu_0}^{X}$ and $T_{\mu_0}^{Y}$ lie above the generic turning point of \eqref{eq-blow-up-e1-pomoc}. This assumption will be important when we define canard cycles as in Figure \ref{fig-limit-cycles}. 






\subsection{Scaling the breaking parameter}\label{subsec-blow-up}

Recall the continuous combination $\widetilde{Z}_{\mu}$ and the PSVF $Z_{\mu} = (X_{\mu},Y_{\mu})$ given by \eqref{eq-combination-hopf} and \eqref{eq-psvf-hopf}, respectively. We introduce the scaling
$$\alpha=\varepsilon\widetilde\alpha,$$
in which $\widetilde\alpha\sim 0$ is called a \emph{regular breaking parameter}. For our purposes we consider a non-linear regularization of $Z_{\mu} = (X_{\mu},Y_{\mu})$ as the family $\widetilde{Z}^{\varphi}_{\varepsilon,\widetilde\alpha,\widetilde\mu}$ given by 
\begin{equation}\label{eq-def-regularization-2}
\widetilde{Z}^{\varphi}_{\varepsilon,\widetilde\alpha,\widetilde\mu}(x,y):= \widetilde{Z}_{(\varepsilon\widetilde\alpha,\widetilde\mu)}\Big{(}\varphi\Big{(}\frac{x}{\varepsilon^{2}}\Big{)},x,y\Big{)}.    
\end{equation} 

\begin{remark}
{\rm In this paper we focus on the regularization $\widetilde{Z}^{\varphi}_{\varepsilon,\mu}$ defined as in Equation \eqref{eq-def-regularization-2first}, nevertheless, we work with $\widetilde{Z}^{\varphi}_{\varepsilon,\widetilde\alpha,\widetilde\mu}$  with $\widetilde\alpha$ being a regular breaking parameter in Equation \eqref{eq-def-regularization-2}. The main reason why we work with $\widetilde{Z}^{\varphi}_{\varepsilon,\widetilde\alpha,\widetilde\mu}$ instead of $\widetilde{Z}^{\varphi}_{\varepsilon,\mu}$ is that we can then use results from \cite{DeMaesschalckDumortier1,Dumortier} (after the rescaling $x = \varepsilon^2\widetilde{x}$ and multiplication by $\varepsilon^2$). Due to this fact, we adopt the notation $(\varepsilon\widetilde\alpha,\widetilde\mu)$ instead of $\mu$.}

\end{remark}


\subsection{Cylindrical blow-up}\label{section-cyl-BU}

In order to study the dynamics of $\widetilde{Z}^{\varphi}_{\varepsilon,\widetilde\alpha,\widetilde\mu}$ near the switching locus $\Sigma=\{x=0\}$, we perform a \emph{cylindrical blow-up} of the form
\begin{equation*}\label{eq-blow-up}
  \begin{array}{rccc}
\Phi: & \mathcal{M} & \rightarrow & \mathbb{R}^{3} \\
& (\tilde{x},y,\tilde{\varepsilon},\rho) & \mapsto & (\rho^{2} \tilde{x}, y, \rho\tilde{\varepsilon}) = (x,y,\varepsilon);
\end{array}
\end{equation*}
with $\mathcal{M}$ being a \emph{manifold with corners} (see \cite{Joyce} for details), $(\tilde{x},\tilde{\varepsilon})\in\mathbb{S}^{1}$, $y\in\mathbb{R}$ and $\tilde{\varepsilon},\rho \geq 0$. This blow-up has a slightly different expression from the usual approach \cite{BuzziSilvaTeixeira} (it is quasi-homogeneous instead of homogeneous). The \emph{blown-up vector field} is defined as the pullback of $\widetilde{Z}^{\varphi}_{\varepsilon,\widetilde\alpha,\widetilde\mu}+0\frac{\partial}{\partial\varepsilon}$ multiplied by $\rho^{2}$: 
$${Z}^{\varphi}_{\widetilde\alpha,\widetilde\mu}:=\rho^{2}\Phi^*\left(\widetilde{Z}^{\varphi}_{\varepsilon,\widetilde\alpha,\widetilde\mu}+0\frac{\partial}{\partial\varepsilon}\right).$$

In Sections \ref{subsec-blow-up-e1}--\ref{subsec-blow-up-u-1}, we study the dynamics of ${Z}^{\varphi}_{\widetilde\alpha,\widetilde\mu}$ near the exceptional divisor $\mathcal{C} = \{\Phi^{-1}(\Sigma)\}$, which is a half-cylinder (see Figure \ref{fig-cylindrical-blowup}). This study is carried out using directional charts.



\begin{figure}[ht]\center{
\begin{overpic}[width=0.5\textwidth]{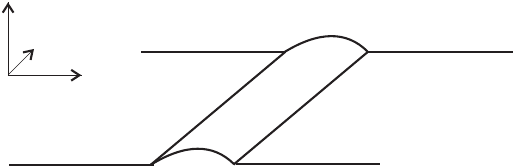}
\put(0,33){$\varepsilon$}
\put(7,23){$y$}
\put(17,17){$x$}
\end{overpic}}
\caption{\footnotesize{Blowing up the switching line $\Sigma$.}}
\label{fig-cylindrical-blowup}
\end{figure}

\subsubsection{Dynamics in the chart $\tilde{\varepsilon} = 1$}\label{subsec-blow-up-e1} 
In this scaling chart we have $x = \varepsilon^{2}x_{2}$ where $(x_2,y)$ is kept in a large compact set in $\mathbb R^2$ and $\varepsilon>0$ is small. The vector field $\widetilde{Z}^{\varphi}_{\varepsilon,\widetilde\alpha,\widetilde\mu}+0\frac{\partial}{\partial\varepsilon}$ yields (after multiplication by the positive factor $\varepsilon^2$) to
\begin{equation}\label{eq-blow-up-e1}
\left\{
  \begin{array}{rclcl}
    \dot{x}_{2}  & = & y - \varphi^{2}(x_{2}) & + & A_{(\varepsilon\widetilde\alpha,\widetilde\mu)}\left(\varphi(x_{2}),\varepsilon^{2}x_{2}\right), \\
    \dot{y}  & = & \varepsilon^{2}\Big{(}\varepsilon\widetilde\alpha - \varphi(x_{2}) & + & B_{(\varepsilon\widetilde\alpha,\widetilde\mu)}\left(\varphi(x_{2}),\varepsilon^{2}x_{2},y\right)\Big{)}, 
  \end{array}
\right.
\end{equation}
with $\dot\varepsilon=0$ ($\varepsilon$ is the singular perturbation parameter). For $\varepsilon=0$, equation \eqref{eq-blow-up-e1} turns to
\begin{equation}\label{eq-blow-up-e1-fast-sub}
\left\{
  \begin{array}{rclcl}
    \dot{x}_{2}  & = & y - \varphi^{2}(x_{2}) & + & A_{(0,\widetilde\mu)}(\varphi(x_{2}),0), \\
    \dot{y}  & = & 0. 
  \end{array}
\right.
\end{equation}

The curve of singularities of \eqref{eq-blow-up-e1-fast-sub} is given by $C_{0} = \big{\{}y =F_{\widetilde\mu}(x_2)\big{\}}$,
where $F_{\widetilde\mu}$ is defined in \eqref{eq-def-f-g}.
The curve $C_0$ contains two horizontal portions $\{y=y_{(0,\widetilde\mu)}^X, x_2\ge 1\}$ and $\{y=y_{(0,\widetilde\mu)}^Y, x_2\le -1\}$, where $y_{(0,\widetilde\mu)}^X$ is defined in \eqref{eq-tangency-X} and $y_{(0,\widetilde\mu)}^Y$ in \eqref{eq-tangency-Y}. See Figure \ref{fig-chart-e1}. 

Recall that in the interval $(-1,1)$ we adopt Assumption \textbf{(A2)}. Assumption \textbf{(A2)} is true locally near $x_2=0$, due to Assumption \textbf{(A0)}.  The Assumption \textbf{(A2)} implies that for each $\widetilde\mu\sim\widetilde\mu_0$ the curve $C_0$ has a parabola-like shape inside the segment $[-M_1,M_2]$ with a normally attracting branch ($x_2\in (0,M_2]$) and a normally repelling branch ($x_2\in [-M_1,0)$). It is clear that the transition function $\varphi$ has to be monotonic on $[-M_1,M_2]$ (not necessarily monotonic outside this segment).  

The slow dynamics \cite[Chapter 3]{DMDR} along the segment $[-M_1,M_2]$ is given by 
\begin{equation}\label{eq-blow-up-e1-slow}
{x_{2}'} = \displaystyle\frac{dx_{2}}{d\tau} = \displaystyle\frac{G_{\widetilde\mu}(x_2)}{F_{\widetilde\mu}'(x_2)},   
\end{equation}
in which $\tau=\varepsilon^2 t$ stands for the slow time (we denote by $t$ the fast time in \eqref{eq-blow-up-e1}). The Assumption \textbf{(A3)} defined in Section \ref{sec-pws-model} assures that the slow dynamics \eqref{eq-blow-up-e1-slow} is regular for all $x_2\in [-M_1,M_2]$ and points from the attracting branch to the repelling branch.

The following slow divergence integrals \cite[Chapter 5]{DMDR} (see also \cite{DeMaesschalckHuzak}) play an important role in this paper:
\begin{equation}\label{PWSSDI}
I_{\widetilde\mu}^+(x_2):= - \displaystyle\int_{x_2}^{0}\frac{F'_{\widetilde\mu}(s)^2}{G_{\widetilde\mu}(s)}ds<0, \ x_2\in (0,M_2], \quad  I_{\widetilde\mu}^-(x_2):= - \displaystyle\int_{x_2}^{0}\frac{F'_{\widetilde\mu}(s)^2}{G_{\widetilde\mu}(s)}ds<0, \ x_2\in [-M_1,0).
\end{equation}

These are integrals of the divergence of the vector field \eqref{eq-blow-up-e1-fast-sub}, computed along $C_{0}$ with respect to the slow time $\tau$. More precisely, the integral $I_{\widetilde\mu}^+(x_2)$ is computed along the attracting segment $[0,x_2]\subset [0,M_2]$ and $I_{\widetilde\mu}^-(x_2)$ along the repelling segment $[x_2,0]\subset [-M_1,0]$. See also Sections \ref{sec-terminal-case} and \ref{section-dodging-case}.

\begin{figure}[ht]\center{
\begin{overpic}[width=0.45\textwidth]{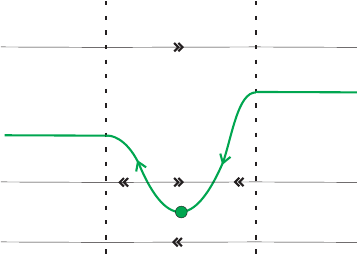}
\put(20,-4){$x_{2} = -1$}
\put(63,-4){$x_{2} = 1$}
\end{overpic}}
\caption{\footnotesize{Phase portrait of \eqref{eq-blow-up-e1-fast-sub} (the family chart $\tilde{\varepsilon} = 1$).}}
\label{fig-chart-e1}
\end{figure}

\subsubsection{Dynamics in the chart $\tilde{x} = 1$}\label{subsec-blow-up-u1} In this chart we have
$(x,\varepsilon) = (\rho_{3}^{2},\rho_{3}\varepsilon_{3})$. The vector field $\widetilde{Z}^{\varphi}_{\varepsilon,\widetilde\alpha,\widetilde\mu}+0\frac{\partial}{\partial\varepsilon}$ changes, after multiplication by $\rho_{3}^{2}$, into  
\begin{equation}\label{eq-blow-up-u1}
\left\{
  \begin{array}{rcl}
    \dot\rho_{3} & = & \rho_{3}H_{\widetilde\alpha,\widetilde\mu}(\rho_{3}, y, \varepsilon_{3}), \\
    \dot y & = & \rho_{3}^{2}\Big{(}\rho_{3}\varepsilon_{3}\widetilde\alpha - \varphi\big{(}\frac{1}{\varepsilon_{3}^{2}}\big{)} + B_{(\rho_{3}\varepsilon_{3}\widetilde\alpha,\widetilde\mu)}\Big{(}\varphi\big{(}\frac{1}{\varepsilon_{3}^{2}}\big{)},\rho_{3}^{2},y\Big{)}\Big{)}, \\
   \dot \varepsilon_{3} & = & -\varepsilon_{3}H_{\widetilde\alpha,\widetilde\mu}(\rho_{3}, y, \varepsilon_{3}),
  \end{array}
\right.       
\end{equation}
in which
$$H_{\widetilde\alpha,\widetilde\mu}(\rho_{3}, y, \varepsilon_{3}) =\frac{1}{2}\Big{(} y -\varphi^{2}\big{(}\frac{1}{\varepsilon_{3}^{2}}\big{)} + A_{(\rho_{3}\varepsilon_{3}\widetilde\alpha,\widetilde\mu)}\Big{(}\varphi\big{(}\frac{1}{\varepsilon_{3}^{2}}\big{)},\rho_{3}^{2}\Big{)}\Big{)}.$$

Observe that in \eqref{eq-blow-up-u1} we can set $\varphi\big{(}\frac{1}{\varepsilon_{3}^{2}}\big{)} = 1$ for $0\leq \varepsilon_{3} \leq 1$. 
The line $\{\rho_{3} = \varepsilon_{3} = 0\}$ consists of singularities of \eqref{eq-blow-up-u1}. The Jacobian matrix of \eqref{eq-blow-up-u1} evaluated in $(0,y,0)$ is given by
$$J_{3} = \left(
  \begin{array}{ccc}
    \frac{y-y_{(0,\widetilde\mu)}^X}{2} & 0 & 0\\
    0 & 0 & 0 \\
    0 & 0 & -\frac{y-y_{(0,\widetilde\mu)}^X}{2}  \\
  \end{array}
\right),$$
which means that all the singularities are semi-hyperbolic, except for $T_{(0,\widetilde\mu)}^{X}=(0,y_{(0,\widetilde\mu)}^X,0)$ where we deal with a degenerate singularity.

\begin{figure}[ht]\center{
\begin{overpic}[width=0.45\textwidth]{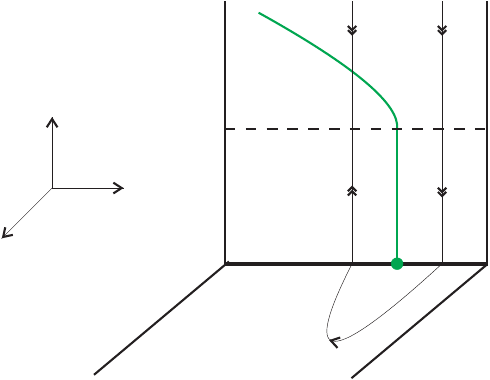}
\put(10,55){$\varepsilon_{3}$}
\put(-4,26){$\rho_{3}$}
\put(26,38){$y$}
\put(74,17){$T_{(0,\widetilde\mu)}^{X}$}
\put(101,50){$\varepsilon_{3} = 1$}
\end{overpic}}
\caption{\footnotesize{Phase portrait of \eqref{eq-blow-up-u1} (the chart $\tilde{x} = 1$). On the invariant plane $\{\rho_{3} = 0\}$ one obtains a smooth curve of singularities given by $\{H_{\widetilde\alpha,\widetilde\mu}(0, y, \varepsilon_{3}) = 0\}$. Such curve is a straight segment for $0 \leq \varepsilon_{3} \leq 1$ and it contains the point $T_{(0,\widetilde\mu)}^{X}$.}}
\label{fig-chart-u1}
\end{figure}

\subsubsection{Dynamics in the chart $\tilde{x} = -1$}\label{subsec-blow-up-u-1}

Using $(x,\varepsilon) = (-\rho_{1}^{2},\rho_{1}\varepsilon_{1})$ one obtains, after multiplication by $\rho_{1}^{2}$, the system
\begin{equation}\label{eq-blow-up-u-1}
\left\{
  \begin{array}{rcl}
    \rho'_{1} & = & -\rho_{1}G_{\widetilde\alpha,\widetilde\mu}(\rho_{1}, y, \varepsilon_{1}), \\
    y' & = & \rho_{1}^{2}\Big{(}\rho_{1}\varepsilon_{1}\widetilde{\alpha} - \varphi\big{(}-\frac{1}{\varepsilon_{1}^{2}}\big{)} + B_{(\rho_{1}\varepsilon_{1}\widetilde\alpha,\widetilde\mu)}\Big{(}\varphi\big{(}-\frac{1}{\varepsilon_{1}^{2}}\big{)},-\rho_{1}^{2},y\Big{)}\Big{)}, \\
    \varepsilon'_{1} & = & \varepsilon_{1}G_{\widetilde\alpha,\widetilde\mu}(\rho_{1}, y, \varepsilon_{1}),
  \end{array}
\right.  
\end{equation}
in which
$$G_{\widetilde\alpha,\widetilde\mu}(\rho_{1}, y, \varepsilon_{1}) =\frac{1}{2}\Big{(} y -\varphi^{2}\big{(}-\frac{1}{\varepsilon_{1}^{2}}\big{)} + A_{(\rho_{1}\varepsilon_{1}\widetilde\alpha,\widetilde\mu)}\Big{(}\varphi\big{(}-\frac{1}{\varepsilon_{1}^{2}}\big{)},-\rho_{1}^{2}\Big{)}\Big{)}.$$

One can set $\varphi\big{(}-\frac{1}{\varepsilon_{1}^{2}}\big{)} = - 1$ for $0 \leq \varepsilon_{1} \leq 1$ and the phase-portrait of Equation \eqref{eq-blow-up-u-1} can be sketched analogously as it was described in Subsection \ref{subsec-blow-up-u1}.

\section{Candidates for limit cycles}\label{sec-candidates}

Recall that $\alpha = \varepsilon\widetilde\alpha$, and we are considering the parameter $\mu$ as $\mu = (\alpha,\widetilde\mu) = (\varepsilon\widetilde\alpha,\widetilde\mu)$. See also the beginning of Subsection \ref{subsec-blow-up}.

Define the \emph{half return map of $X$} as
\begin{equation*}\label{eq-def-half-return}
  \begin{array}{rccl}
\xi_{X}: & \{y > y_{(0,\widetilde\mu)}^{X}\} \subset\Sigma & \rightarrow & \{y < y_{(0,\widetilde\mu)}^{X}\}\subset\Sigma\\
& p = (0,y) & \mapsto & \xi_{X}(y) = \pi_{2}\Big{(}\phi_{X}\big{(}t(p), p\big{)}\Big{)}, 
  \end{array}
\end{equation*}
in which $\pi_{2}$ is the projection in the second coordinate, $\phi_{X}$ is the flow of $X_{(0,\widetilde\mu)}$, and $t(p) > 0$ denotes the smallest (and finite) time in which the flow by $p$ intersects $\Sigma$. The half return map $\xi_{X}$ defined as above is smooth. The \emph{half return map of $Y_{(0,\widetilde\mu)}$} is denoted by $\xi_{Y}$ and it is defined in the same fashion, but we consider the flow of $Y_{(0,\widetilde\mu)}$ in backward time instead. See Figure \ref{fig-half-return}.

\begin{remark}
{\rm Of course, the half return maps will depend on the parameter $\widetilde\mu$. However, for simplicity sake, we will denote them by $\xi_{X,Y}$.}   
\end{remark}

\begin{figure}[ht]\center{
\begin{overpic}[width=0.3\textwidth]{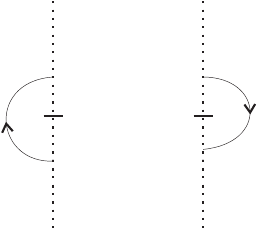}
\put(23,59){$\widetilde{y}$}
\put(74,59){$\widetilde{y}$}
\put(27,42){$T_{(0,\widetilde\mu)}^{Y}$}
\put(57,42){$T_{(0,\widetilde\mu)}^{X}$}
\put(23,27){$\xi_{Y}(\widetilde{y})$}
\put(57,29){$\xi_{X}(\widetilde{y})$}
\end{overpic}}
\caption{\footnotesize{Half-return maps $\xi_{X}$ and $\xi_{Y}$. Such maps are defined by considering the flow of $X_{(0,\widetilde\mu)}$ and $Y_{(0,\widetilde\mu)}$ in forward and backward time, respectively. }}
\label{fig-half-return}
\end{figure}

The \emph{first return map of $Z$} is given by $\xi = \xi_{Y}^{-1}\circ\xi_{X}(y)$. If $\xi(y) = y + ay^{2} + O(y^{3})$ with $a \neq 0$, then the invisible-invisible fold-fold has codimension 1, and it was denoted by II$_{2}$ in \cite{KGR} (see also \cite{GST}). Such singularity present a focus-like behavior. Moreover, if $a < 0$ then the singularity is an attracting non-smooth focus and it is a repelling one otherwise. If $\xi(y) = Id$, then the invisble-invisible fold-fold is a non-smooth center \cite{BuzziCarvalhoTeixeira}. We assume that the half return maps $\xi_{X,Y}(y)$ are well defined in a segment $\sigma_{0}$ as described in Section \ref{sec-sections-transitions}.

Consider the dynamics on the top of the half cylinder as described in Subsection \ref{subsec-blow-up-e1}. Since the origin is a generic Hopf turning point, it is clear that one could also expect creation of limit cycles from orbits inside the region $|x_{2}| < 1$ (recall Figure \ref{fig-chart-e1}). However, we also want to study ``big'' limit cycles in the sense that they are not strictly contained inside the regularization stripe. In what follows, we precisely define the shape of the limit periodic sets (called canard cycles), at level $\varepsilon = 0$ and $\widetilde\mu = \widetilde\mu_{0}$, that we are interested in. Two cases will be considered: \emph{terminal} and \emph{dodging cases}. This terminology is inspired in \cite[Lemma 5.3]{DMDR}.

Firstly we describe the terminal case (see Figure \ref{fig-limit-cycles}(a)). A canard cycle is the concatenation of orbits as follows. Starting on the top of the cylinder, we follow the orientation of a fast orbit in a level $\widetilde{y} > y_{(0,\widetilde\mu_{0})}^{X}, y_{(0,\widetilde\mu_{0})}^{Y}$ until one reaches the corner of the half cylinder. Then one continues through orbits of $X_{(0,\widetilde\mu_{0})}$ and, after reach the corner of the cylinder once again, we follow fast orbits until we hit the attracting branch of the critical manifold $C_0$ at the point with the $x_2$ coordinate contained in the interval $(0,M_2]$. One continues by the slow flow along $C_{0}$, crosses the turning point and continues along the reppeling branch of the critical manifold. Then one leaves the repelling branch at a point with $x_2$ coordinate contained in $[-M_1,0)$, follows the corresponding fast orbit until the corner of the half cylinder, continues by the flow of $Y_{(0,\widetilde\mu_{0})}$ and finally comes back to the same level $\widetilde{y}$ on the top of the cylinder. We assume that such a canard cycle exists and we denote it by $\Upsilon^{\widetilde{y}}$. See Figure \ref{fig-limit-cycles}(a). Recall the constants $M_{1,2}$ from Section \ref{sec-pws-model}.

Now, we describe the dodging case (see Figure \ref{fig-limit-cycles}(b)). Without loss of generality, assume $y_{(0,\widetilde\mu_{0})}^{Y} <y_{(0,\widetilde\mu_{0})}^{X}$. In this case the canard cycle is the concatenation as follows. Starting on the top of the cylinder, we follow the orientation of a fast orbit in a level $y_{(0,\widetilde\mu_{0})}^{Y} < \widetilde{y}<y_{(0,\widetilde\mu_{0})}^{X}$ until one reaches the attracting branch of $C_0$ at a point with $x_2$ coordinate contained in $(0,M_2]$. Now we proceed as in the previous case: one continues by the slow flow, crosses the turning point and continues along the reppeling branch of $C_{0}$. Then one leaves $C_0$ at a point with $x_2$ in $[-M_1,0)$, follows the corresponding fast orbit until the corner of the half cylinder, continues by the flow of $Y_{(0,\widetilde\mu_{0})}$ and finally comes back to the same level $\widetilde{y}$ on the top of the cylinder. We assume that such a canard cycle exists and denote it by $\Gamma^{\widetilde{y}}$. See Figure \ref{fig-limit-cycles}(b). We use a similar definition when $y_{(0,\widetilde\mu_{0})}^{X} <y_{(0,\widetilde\mu_{0})}^{Y}$.

There are canard cycles in the singular limit $\varepsilon = 0$ that can also generate limit cycles for $\varepsilon > 0$ and they will not be covered in this paper, since such cases are degenerate in the sense that they require further blow-up analysis in order to define transition maps. Such degenerate cases are characterized by having fast orbit segments in the level $y = y_{(0,\widetilde\mu_{0})}^{X}$ or $y = y_{(0,\widetilde\mu_{0})}^{Y}$ on the top of the cylinder. See Figure \ref{fig-degenerate-cases}.

\begin{figure}[ht]\center{
\begin{overpic}[width=0.7\textwidth]{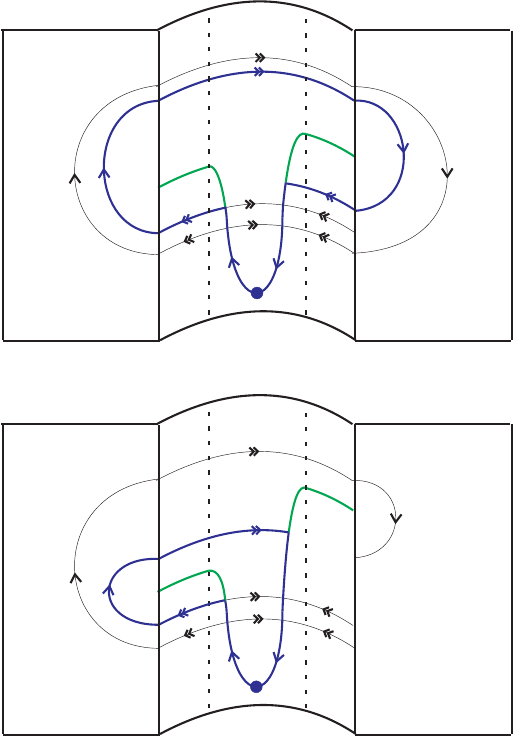}

\put(35,29){$\Gamma^{\tilde y}$}
\put(35,87){$\Upsilon^{\tilde y}$}

\put(-5,75){$\textbf{(a)}$}
\put(-5,23){$\textbf{(b)}$}

\put(22,47){$x_{2} = -1$}
\put(38,47){$x_{2} = 1$}

\put(21,100){$x_{2} = -1$}
\put(39,100){$x_{2} = 1$}

\put(48,78){$T_{(0,\widetilde\mu_{0})}^{X}$}
\put(15,76){$T_{(0,\widetilde\mu_{0})}^{Y}$}

\put(48,29){$T_{(0,\widetilde\mu_{0})}^{X}$}
\put(15,20){$T_{(0,\widetilde\mu_{0})}^{Y}$}
\end{overpic}}
\caption{\footnotesize{Canard cycles $\Upsilon^{\widetilde{y}}$ and $\Gamma^{\widetilde{y}}$ (highlighted in blue) that are candidates for limit cycles of $\widetilde{Z}^{\varphi}_{\varepsilon,\widetilde\alpha,\widetilde\mu}$. Figures (a) and (b) (on the top and on the bottom, respectively) show the terminal and dodging cases, respectively.}}
\label{fig-limit-cycles}
\end{figure}

\begin{figure}[ht]\center{
\begin{overpic}[width=0.7\textwidth]{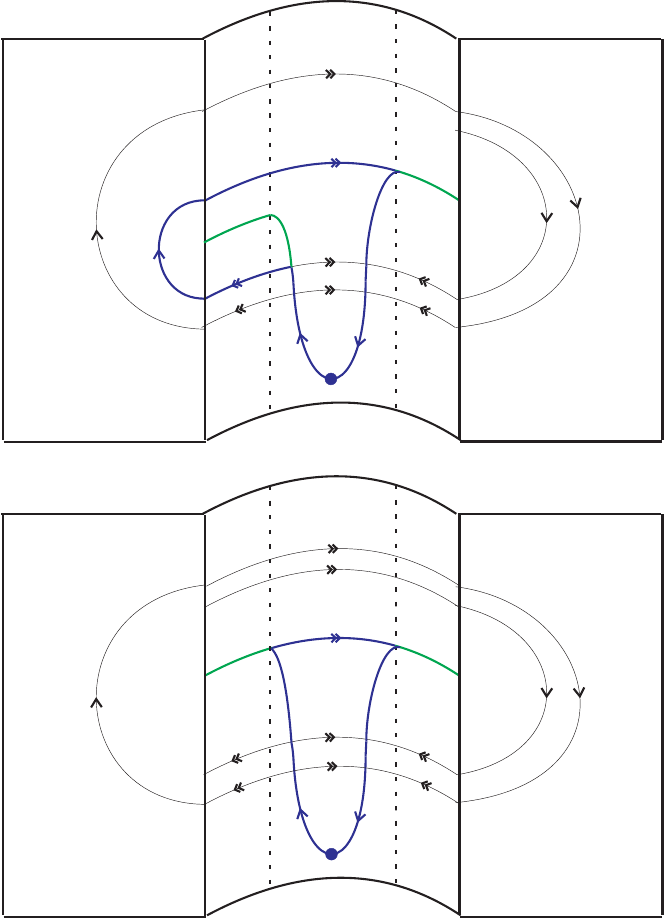}
\put(21,48){$x_{2} = -1$}
\put(41,48){$x_{2} = 1$}

\put(21,100){$x_{2} = -1$}
\put(41,100){$x_{2} = 1$}

\put(51,77){$T_{(0,\widetilde\mu_{0})}^{X}$}
\put(18,73){$T_{(0,\widetilde\mu_{0})}^{Y}$}

\put(51,26){$T_{(0,\widetilde\mu_{0})}^{X}$}
\put(15,26){$T_{(0,\widetilde\mu_{0})}^{Y}$}
\end{overpic}}
\caption{\footnotesize{Degenerate canard cycles (highlighted in blue) that can produce limit cycles of $\widetilde{Z}^{\varphi}_{\varepsilon,\widetilde\alpha,\widetilde\mu}$.}}
\label{fig-degenerate-cases}
\end{figure}






\section{Analysis of the terminal case}\label{sec-terminal-case}

In this section we prove that the slow divergence integral indeed can be used in order to detect limit cycles of the regularized vector field $\widetilde{Z}^{\varphi}_{\varepsilon,\widetilde\alpha,\widetilde\mu}$ (as in \eqref{eq-def-regularization-2}), for $\varepsilon$ small and positive. We consider the terminal case defined in Section \ref{sec-candidates} and assume that the assumptions \textbf{(A0)}-\textbf{(A3)} are satisfied.

The transversal sections will be denoted by $\sigma_{i}^{\pm}$ for $i=0,\dots,5$, and we simply denote $\sigma_{0}^{\pm} = \sigma_{0}$ and $\sigma_{5}^{\pm} = \sigma_{5}$. The transition maps will be denoted by $\Pi_{i}^{\pm}: \sigma_{i-1}^{\pm}\rightarrow \sigma_{i}^{\pm}$ for $i=1,\dots,5$. We will compute the transversal sections and transition maps in the right-hand side of Figure \ref{fig-transversal-sections}(a). The study of the left hand side is completely analogous.

\begin{figure}[ht]\center{
\begin{overpic}[width=0.6\textwidth]{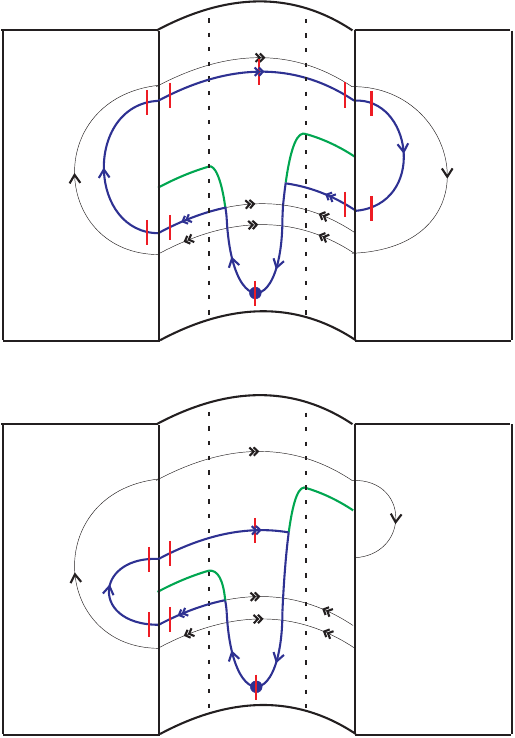}
\put(-5,75){$\textbf{(a)}$}
\put(-5,23){$\textbf{(b)}$}

\put(18,82){$\sigma_{2}^{-}$}
\put(23,84){$\sigma_{1}^{-}$}
\put(34,87){$\sigma_{0}$}
\put(43,84){$\sigma_{1}^{+}$}
\put(49,82){$\sigma_{2}^{+}$}

\put(18,71){$\sigma_{3}^{-}$}
\put(22,72){$\sigma_{4}^{-}$}
\put(34,63){$\sigma_{5}$}
\put(44,75){$\sigma_{4}^{+}$}
\put(49,75){$\sigma_{3}^{+}$}

\put(18,27){$\sigma_{2}^{-}$}
\put(22,28){$\sigma_{1}^{-}$}
\put(34,25){$\sigma_{0}$}

\put(17,17){$\sigma_{3}^{-}$}
\put(22,18){$\sigma_{4}^{-}$}
\put(34,9){$\sigma_{5}$}

\end{overpic}}
\caption{\footnotesize{Transversal sections highlighted in red. Figures (a) and (b) represent terminal and dodging cases, respectively.}}
\label{fig-transversal-sections}
\end{figure}

\subsection{Sections and transitions on the family chart $\tilde{\varepsilon} = 1$}\label{sec-sections-transitions}

We assume that there is $\widetilde{y}\in \{y > y_{(0,\widetilde\mu_{0})}^{X}, y_{(0,\widetilde\mu_{0})}^{Y}\}$ such that $\xi_{X}(\widetilde{y}) \in \{0 < y < y_{(0,\widetilde\mu_{0})}^{X}\}$ and $\xi_{Y}(\widetilde{y}) \in \{0 < y < y_{(0,\widetilde\mu_{0})}^{Y}\}$. The canard cycle through $\widetilde{y}$ sketched in Figure \ref{fig-limit-cycles}(a) and described in Section \ref{sec-candidates} must be well defined. We fix an open interval $S_0$ containing $\widetilde{y}$ and assume that this property is true for every $y\in S_0$.


In this family chart, we define the sections (as sketched in Figure \ref{fig-transversal-sections}(a))
\begin{equation*}
    \begin{array}{rcl}
\sigma_{0} & = &\{x_{2} = 0, \quad 0 \leq \varepsilon < \varepsilon_{0}, \quad y\in S_0 \}, \\
\sigma_{1}^{+} & = & \{x_{2} = \bar{x}_{2}, \quad 0 \leq \varepsilon < \varepsilon_{0}, \quad y\in S_{1} \}, \\   
\sigma_{4}^{+} & = & \{x_{2} = \bar{x}_{2}, \quad 0 \leq \varepsilon < \varepsilon_{0}, \quad y \in S_{4} \}, 
    \end{array}
\end{equation*}
in which $\bar{x}_{2} > 1$ is large and fixed and $\varepsilon_{0}$ is small. Moreover, $S_{1}$ and $S_{4}$ are open intervals containing $\widetilde{y}$ and $\xi_{X}(\widetilde{y})$, respectively. The sections $\sigma_{0}$, $\sigma_{1}^{+}$ and $\sigma_{4}^{+}$ are parametrized by the variable $y$. Following \cite{DeMaesschalckDumortier1}, the section $\sigma_{5}$  is defined near the  generic turning point $(x_2,y)=(0,0)$, but in a new phase space $(\hat x_2,\hat y)$ after applying the rescaling $(x_2,y)=(\varepsilon^2\hat x_2,\varepsilon\hat y)$ to \eqref{eq-blow-up-e1} and dividing the new system by $\varepsilon>0$. Then we take $\sigma_{5} = \{\hat x_{2} = 0, \ 0 \leq \varepsilon < \varepsilon_{0}, \ \hat y \in \mathbb R \}$.  

We have the following Lemma.

\begin{lemma}\label{lemma-p1+}
The transition map $\Pi_{1}^{+}:\sigma_{0}\rightarrow \sigma_{1}^{+}$ is well defined and it is $C^{\infty}$. It depends on $(y,\varepsilon,\widetilde\alpha,\widetilde\mu)$.   
\end{lemma}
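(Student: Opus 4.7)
The plan is to combine an a priori positive lower bound on $\dot x_2$ along layer orbits in the strip $\{0\le x_2\le \bar x_2\}$ with the standard smooth dependence of ODE flows on parameters. Once I establish uniform transversality to the exit slice $\{x_2=\bar x_2\}$ in a neighbourhood of the singular canard, the implicit function theorem will deliver the desired $C^\infty$ transition map.

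Concretely, I would first analyse the layer flow. Setting $\varepsilon=0$ in \eqref{eq-blow-up-e1} yields $\dot x_2=y-F_{\widetilde\mu}(x_2)$, $\dot y=0$, so the orbits are horizontal segments. By the definition of the canard cycle $\Upsilon^{\widetilde y}$ in Section \ref{sec-candidates}, the layer orbit at level $\widetilde y$ lies entirely in $\{y>F_{\widetilde\mu_0}(x_2)\}$ and crosses $\{x_2=\bar x_2\}$ transversely. The key point is that $\widetilde y>y^X_{(0,\widetilde\mu_0)}=F_{\widetilde\mu_0}(x_2)$ for $x_2\ge 1$ (since $\varphi\equiv 1$ there), while assumption \textbf{(A2)} forces $F_{\widetilde\mu_0}$ to stay below $y^X_{(0,\widetilde\mu_0)}$ on $[0,M_2]$, and on $[M_2,1]$ the curve $C_0$ connects smoothly to the horizontal branch. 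Compactness of $[0,\bar x_2]$ together with continuity of $F_{\widetilde\mu}$ then yields, after possibly shrinking $S_0$ around $\widetilde y$ and taking $\widetilde\mu$ in a small interval $I$ around $\widetilde\mu_0$, a constant $c>0$ with
\begin{equation*}
y-F_{\widetilde\mu}(x_2)\ge c,\qquad (x_2,y,\widetilde\mu)\in[0,\bar x_2]\times S_0\times I.
\end{equation*}

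Next, I would invoke smooth dependence on parameters. The right-hand side of \eqref{eq-blow-up-e1} is $C^\infty$ jointly in $(x_2,y,\varepsilon,\widetilde\alpha,\widetilde\mu)$, hence its flow $\Phi^{\varepsilon,\widetilde\alpha,\widetilde\mu}_t$ is jointly $C^\infty$. By continuity the bound above extends to $\dot x_2\ge c/2$ on $[0,\bar x_2]\times S_0$ for $(\varepsilon,\widetilde\alpha)$ sufficiently small, so every trajectory starting at $(0,y)\in\sigma_0$ reaches $\{x_2=\bar x_2\}$ in a uniformly bounded time, which I denote $T(y,\varepsilon,\widetilde\alpha,\widetilde\mu)$. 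The map $(t,y,\varepsilon,\widetilde\alpha,\widetilde\mu)\mapsto\pi_{x_2}\Phi^{\varepsilon,\widetilde\alpha,\widetilde\mu}_t(0,y)-\bar x_2$ has time-derivative bounded away from zero at $t=T$, so the implicit function theorem gives $T\in C^\infty$ and, finally,
\begin{equation*}
\Pi_1^+(y,\varepsilon,\widetilde\alpha,\widetilde\mu)=\pi_y\bigl(\Phi^{\varepsilon,\widetilde\alpha,\widetilde\mu}_{T(y,\varepsilon,\widetilde\alpha,\widetilde\mu)}(0,y)\bigr)
\end{equation*}
is $C^\infty$ in all four arguments, with $\Pi_1^+(S_0)\subset S_1$ after a final shrinking of $S_0$ if needed.

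I do not anticipate a real obstacle here; the only point that deserves attention is the uniform transversality of the layer field on $[0,\bar x_2]\times S_0$, which is guaranteed by the choice of $\widetilde y$ strictly above both tangency points (assumption \textbf{(A1)} together with the canard-cycle definition). Everything else is regular-perturbation bookkeeping, and this argument is intended as a template for the subsequent lemmas describing the other transitions $\Pi^{\pm}_j$ in chart $\widetilde\varepsilon=1$ that avoid the slow-fast Hopf point.
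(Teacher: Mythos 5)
Your argument is correct and is essentially the paper's own proof, which simply observes that system \eqref{eq-blow-up-e1} is regular in the region between $\sigma_0$ and $\sigma_1^+$ so the transition map is well defined and $C^\infty$; you have merely filled in the standard regular-perturbation details (uniform transversality, bounded hitting time, implicit function theorem). The only cosmetic slip is attributing the bound $\widetilde y>y^X_{(0,\widetilde\mu_0)},y^Y_{(0,\widetilde\mu_0)}$ to assumption \textbf{(A1)}, whereas it is part of the standing hypothesis on $\widetilde y$ made at the start of Section \ref{sec-sections-transitions}; this does not affect the validity of the proof.
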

\begin{proof}
The transition map $\Pi_{1}^{+}$ is defined by following trajectories of the slow-fast system \eqref{eq-blow-up-e1} between $\sigma_0$ and $\sigma_{1}^{+}$. Since \eqref{eq-blow-up-e1} is regular in this region, the statement is true.    
\end{proof}

Since the origin of \eqref{eq-blow-up-e1} is a generic turning point and the slow dynamics is regular in $[-M_{1},M_{2}]$, then we are in the framework of \cite{DeMaesschalckDumortier1}. Therefore, by \cite[Theorem 4]{DeMaesschalckDumortier1} one obtains the transition map $\Pi_{5}^{+}:\sigma_{4}^{+}\rightarrow\sigma_{5}$ and its properties. This transition map is defined by following the orbits of \eqref{eq-blow-up-e1} between sections $\sigma_4^+$ and $\sigma_5$. See Lemma \ref{lemma-p5+} below.

In what follows, we define
\begin{equation}\label{eq-sdi-i5}
I_{5}^{\pm}(y,\widetilde\mu) := I_{\widetilde\mu}^{\pm}(x_{2}^{\pm}(y)),
\end{equation}
where $I_{\widetilde\mu}^{+}$ (resp. $I_{\widetilde\mu}^{-}$) is the slow divergence integral \eqref{PWSSDI} and $x_{2}^{+}(y)>0$ (resp. $x_{2}^{-}(y)<0$) is the $x_{2}$-coordinate of the $\omega$-limit point (resp. $\alpha$-limit point) of the fast orbit of \eqref{eq-blow-up-e1-fast-sub} through $y\in\sigma_{4}^{+}$ (resp. $y\in\sigma_{4}^{-}$). Clearly, we have $F_{\widetilde\mu}(x_{2}^{\pm}(y))=y$ with $F_{\widetilde\mu}$ defined in \eqref{eq-def-f-g}.

\begin{lemma}\label{lemma-p5+}
The transition map $\Pi_{5}^{+}:\sigma_{4}^{+}\rightarrow\sigma_{5}$ is given by
\begin{equation*}
 \Pi_{5}^{+}(y,\varepsilon,\widetilde\alpha,\widetilde\mu) = f_{5}^{+}(\varepsilon,\widetilde\alpha,\widetilde\mu) - \operatorname{exp}\Bigg{(}\frac{I_{5}^{+}(y,\widetilde\mu) + \Psi_{1}^{+}(y,\varepsilon,\widetilde\alpha,\widetilde\mu) + \Psi_{2}^{+}(\varepsilon,\widetilde\alpha,\widetilde\mu)\varepsilon^{2}\ln\varepsilon}{\varepsilon^{2}}\Bigg{)},   
\end{equation*}
in which $I_{5}^{+}(y,\widetilde{\mu}) < 0$ is the slow divergence integral \eqref{eq-sdi-i5}. The functions $\Psi_{1,2}^{+}$ and $f_{5}^{+}$ are smooth, and $\Psi_{1}^{+}$ is $O(\varepsilon)$.
\end{lemma}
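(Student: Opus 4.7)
My plan is to realize $\Pi_5^+$ as the composition of two transitions: first, from $\sigma_4^+$ across the normally attracting branch of $C_0$ in the region $x_2 \in (0,M_2]$ down to an auxiliary section close to the turning point; and second, through the turning point itself into the rescaling chart containing $\sigma_5$. For the first piece, a standard center-manifold/Fenichel argument applied to the augmented system (with $\dot\varepsilon = 0$) shows that orbits starting on $\sigma_4^+$ make an $O(\varepsilon)$-small fast jump along their layer fiber onto a point whose limiting $x_2$-coordinate equals $x_2^+(y)$, the $\omega$-limit of the fast orbit of \eqref{eq-blow-up-e1-fast-sub} through the initial point on $\sigma_4^+$, and then track the attracting slow manifold. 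The distance from such an orbit to the slow manifold contracts exponentially, at a rate equal to the integral of the divergence along the slow flow, i.e.\ precisely $I_5^+(y,\widetilde\mu)/\varepsilon^2$ to leading order.

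To cross the turning point and land on $\sigma_5$, I would invoke \cite[Theorem 4]{DeMaesschalckDumortier1}, which is designed for slow-fast systems with a generic Hopf turning point and regular slow dynamics across the contact. Its hypotheses are guaranteed by our assumptions: \textbf{(A0)} yields the generic turning-point normal form of \eqref{eq-blow-up-e1} at the origin (after a harmless rescaling of $(x_2,y,t)$), \textbf{(A2)} produces the parabola-like critical manifold with an attracting branch on $(0,M_2]$ and a repelling branch on $[-M_1,0)$, and \textbf{(A3)} makes the slow dynamics regular at $x_2=0$ and directed from attracting to repelling. The theorem then delivers exactly the stated expression: the $\exp\!\bigl(I_5^+/\varepsilon^2\bigr)$ factor encodes the contraction along the attracting branch, the smooth term $f_5^+(\varepsilon,\widetilde\alpha,\widetilde\mu)$ records the $\hat y$-coordinate at which the attracting slow manifold meets $\sigma_5$ in the rescaled coordinates $(\hat x_2,\hat y)$, $\Psi_1^+ = O(\varepsilon)$ bundles the smooth corrections coming from higher-order asymptotic expansions of the slow manifold and from the $O(\varepsilon^2)$ perturbation of the layer equation, and $\Psi_2^+\,\varepsilon^2\ln\varepsilon$ is the standard logarithmic correction produced when matching the outer (slow manifold) expansion to the inner (rescaled) solution at the turning point.

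The step I expect to be the main obstacle, were one to reconstruct the argument without quoting \cite{DeMaesschalckDumortier1}, is the matched-asymptotic analysis at the turning point, specifically the extraction of the $\varepsilon^2\ln\varepsilon$ contribution. The normally hyperbolic contraction along $C_0$ gives the leading term $I_5^+/\varepsilon^2$ rather cleanly, and the smooth remainders $\Psi_1^+$ come from routine Fenichel-type expansions; but the logarithmic correction arises only after a careful blow-up of the Hopf point and a matching of the center manifold in the outer chart with the inner solution in the $(\hat x_2,\hat y)$-chart. Given that \cite[Theorem 4]{DeMaesschalckDumortier1} is formulated in exactly this generality, the proof reduces to verifying the structural hypotheses above and checking smoothness in $(\widetilde\alpha,\widetilde\mu)$, which follows from the smoothness of $A_{\mu}$, $B_{\mu}$, and $\varphi$ on the compact range of $x_2$ used in the construction.
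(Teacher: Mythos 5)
Your proposal is correct and follows essentially the same route as the paper: the paper also justifies Lemma \ref{lemma-p5+} by observing that the origin of \eqref{eq-blow-up-e1} is a generic turning point with regular slow dynamics on $[-M_1,M_2]$ (via \textbf{(A0)}--\textbf{(A3)}) and then invoking \cite[Theorem 4]{DeMaesschalckDumortier1} to obtain the transition map from $\sigma_4^+$ to $\sigma_5$ together with the stated form of $f_5^+$, $\Psi_1^+$, $\Psi_2^+$ and the slow divergence integral $I_5^+$. Your additional commentary on the origin of each term (contraction along the attracting branch, the $\varepsilon^2\ln\varepsilon$ matching correction, the parametrization of $\sigma_5$ in the rescaled $(\hat x_2,\hat y)$ coordinates) is consistent with the paper's setup.
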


\begin{remark}
{\rm The minus sign in the front of the exponential in Lemma \ref{lemma-p5+} is due to the chosen orientation of parametrization of $\sigma_{5}$.}    
\end{remark}

To study the transitions $\Pi_{2,3,4}^{+}$ we will work on chart $\tilde{x} = 1$.

\subsection{Sections and transitions on the family chart $\tilde{x} = 1$} Firstly, we rewrite the transversal sections $\sigma_{1,4}^{+}$ in this system of coordinates (recall the directional charts described in subsection \ref{subsec-blow-up}):
\begin{equation*}
    \begin{array}{rcl}
\sigma_{1}^{+} & = & \{0 \leq \rho_{3} < \varepsilon_{0}\sqrt{\bar{x}_{2}}, \quad \varepsilon_{3} = \frac{1}{\sqrt{\bar{x}_{2}}}, \quad y \in S_{1}\}, \\  
\sigma_{4}^{+} & = & \{0 \leq \rho_{3} < \varepsilon_{0}\sqrt{\bar{x}_{2}}, \quad \varepsilon_{3} = \frac{1}{\sqrt{\bar{x}_{2}}}, \quad y \in S_{4}\},
    \end{array}
\end{equation*}
in which $\bar{x}_{2} > 1$. We further define the following transversal sections, with $\tilde{\rho}_{3}$ being a small and positive constant:
\begin{equation*}
    \begin{array}{rcl}
\sigma_{2}^{+} & = & \{\rho_{3} = \tilde{\rho}_{3}, \quad 0 \leq \varepsilon_{3} < \tilde{\varepsilon}_{0}, \quad y \in S_2\}, \\  
\sigma_{3}^{+} & = & \{\rho_{3} = \tilde{\rho}_{3}, \quad 0 \leq \varepsilon_{3} < \tilde{\varepsilon}_{0}, \quad y \in S_3\},
    \end{array}
\end{equation*}
where $S_{2}$ and $S_{3}$ are open intervals containing $\widetilde{y}$ and $\xi_{X}(\widetilde{y})$, respectively. The following Lemmas state properties of the transition maps $\Pi_{2,3,4}^{+}$. The smoothness of the flow of $\widetilde{Z}^{\varphi}_{\varepsilon,\widetilde\alpha,\widetilde\mu}$ assures Lemma \ref{lemma-p3+}.

\begin{lemma}\label{lemma-p3+}
The transition map $\Pi_{3}^{+}:\sigma_{2}^{+}\rightarrow\sigma_{3}^{+}$ is well defined and it is $C^{\infty}$. Moreover, it depends on $(y,\frac{\varepsilon}{\tilde{\rho}_{3}},\widetilde\alpha,\widetilde\mu)$.    
\end{lemma}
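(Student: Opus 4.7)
The plan is to reduce $\Pi_{3}^{+}$ to a standard smooth flow-box map of the outer vector field $X_{(\varepsilon\widetilde\alpha,\widetilde\mu)}$ between two transverse sections lying in the regular region $\{x\ge\tilde{\rho}_{3}^{2}\}$, and then to read off the stated parameter dependence directly from the chart formulas.

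First I would translate the two sections back to the original $(x,y,\varepsilon)$-coordinates. On $\sigma_{2}^{+}\cup \sigma_{3}^{+}$ one has $\rho_{3}=\tilde{\rho}_{3}$, i.e.\ $x=\tilde{\rho}_{3}^{2}>0$ is fixed, while $\varepsilon=\tilde{\rho}_{3}\varepsilon_{3}$. At $(\varepsilon,\widetilde\alpha,\widetilde\mu)=(0,0,\widetilde\mu_{0})$, the singular limit of the orbit defining $\Pi_{3}^{+}$ is the arc of $X_{(0,\widetilde\mu_{0})}$ running from $(\tilde{\rho}_{3}^{2},\widetilde{y})$ back to $(\tilde{\rho}_{3}^{2},\xi_{X}(\widetilde{y}))$. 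Because $\widetilde{y}>y_{(0,\widetilde\mu_{0})}^{X}$ and $\xi_{X}(\widetilde{y})<y_{(0,\widetilde\mu_{0})}^{X}$, the first component of $X_{(0,\widetilde\mu_{0})}$, which equals $y-1+A_{(0,\widetilde\mu_{0})}(1,\tilde{\rho}_{3}^{2})\simeq y-y_{(0,\widetilde\mu_{0})}^{X}$ for $\tilde{\rho}_{3}$ small, is strictly positive at the starting point and strictly negative at the return point. Hence both sections are transverse to the orbit, the orbit stays in $\{x>\tilde{\rho}_{3}^{2}\}$ strictly between its endpoints (by the very definition of $\xi_{X}$), and this transverse picture persists for parameters in a small neighborhood of $(0,0,\widetilde\mu_{0})$.

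Next, along this transit one has $x\ge\tilde{\rho}_{3}^{2}$, so $x/\varepsilon^{2}\ge\tilde{\rho}_{3}^{2}/\varepsilon^{2}\ge 1$ for all sufficiently small $\varepsilon$, and property $(2)$ of the transition function forces $\varphi(x/\varepsilon^{2})\equiv 1$ on a uniform neighborhood of the orbit arc. Consequently $\widetilde{Z}^{\varphi}_{\varepsilon,\widetilde\alpha,\widetilde\mu}$ coincides there with the smooth family $X_{(\varepsilon\widetilde\alpha,\widetilde\mu)}$, and $\Pi_{3}^{+}$ is the classical flow-box transition of this smooth family between two transverse sections. Smoothness then follows from $C^{\infty}$-dependence of ODE solutions on initial data and parameters, combined with the implicit function theorem applied at the transverse arrival in $\sigma_{3}^{+}$.

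For the stated parameter dependence, note that on $\sigma_{2}^{+}\cup\sigma_{3}^{+}$ we have $\varepsilon_{3}=\varepsilon/\tilde{\rho}_{3}$, and that system \eqref{eq-blow-up-u1} (with $\varphi(1/\varepsilon_{3}^{2})$ replaced by $1$) depends smoothly on $(\rho_{3},y,\varepsilon_{3},\widetilde\alpha,\widetilde\mu)$; evaluating at $\rho_{3}=\tilde{\rho}_{3}$ yields the claimed dependence on $(y,\varepsilon/\tilde{\rho}_{3},\widetilde\alpha,\widetilde\mu)$. This lemma is therefore essentially routine, with no genuine obstacle: the only point requiring attention is the uniform validity of $\varphi(x/\varepsilon^{2})=1$ across the transit, which is guaranteed precisely because $\tilde{\rho}_{3}>0$ is fixed independently of $\varepsilon$.
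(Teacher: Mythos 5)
Your proof is correct and takes essentially the same route as the paper, which disposes of this lemma in one line by observing that the passage between $\sigma_{2}^{+}$ and $\sigma_{3}^{+}$ is a regular (finite-time, transverse) transition of the flow of \eqref{eq-blow-up-u1}, so smoothness and the dependence on $(y,\varepsilon/\tilde\rho_{3},\widetilde\alpha,\widetilde\mu)$ follow from smooth dependence on initial conditions and parameters. The only imprecision is your parenthetical claim that the arc stays in $\{x>\tilde\rho_{3}^{2}\}$ ``by the very definition of $\xi_{X}$'' (the definition only gives $x>0$ between the endpoints), but all you actually need is a uniform positive lower bound for $x$ on the compact transit arc, which holds and suffices to force $\varphi(x/\varepsilon^{2})=1$ for small $\varepsilon$.
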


\begin{proof}
The transition map $\Pi_{3}^{+}$ is defined by following trajectories of \eqref{eq-blow-up-u1} between the sections $\sigma_{2}^{+}$ and $\sigma_{3}^{+}$. Since we assumed that this passage is regular, Lemma \ref{lemma-p3+} follows directly.
\end{proof}

The transition maps $\Pi_{2,4}^{+}$ were studied in \cite[Subsection 4.2]{GucwaSzmolyan}, \cite[Appendix B]{HuzakKristiansen} and \cite[Subsections 3.3 and 3.5]{WangZhang}. We state the properties that we need in the next Lemma.

\begin{lemma}\label{lemma-p24+}
The transition maps $\Pi_{2}^{+}:\sigma_{1}^{+}\rightarrow\sigma_{2}^{+}$ and $\Pi_{4}^{+}:\sigma_{3}^{+}\rightarrow\sigma_{4}^{+}$ are smooth and they are given by
\begin{equation*}
\left.
  \begin{array}{lcc}
    \Pi_{2}^{+}\left(y,\varepsilon\sqrt{\bar{x}_{2}},\widetilde\alpha,\widetilde\mu\right)  & = & g_{2}^{+}(y,\widetilde\alpha,\widetilde\mu) + O(\varepsilon\ln\varepsilon^{-1}), \\
   \Pi_{4}^{+}\left(y,\frac{\varepsilon}{\tilde{\rho}_{3}},\widetilde\alpha,\widetilde\mu\right)  & = & g_{4}^{+}(y,\widetilde\alpha,\widetilde\mu) + O(\varepsilon\ln\varepsilon^{-1}).  
  \end{array}
\right.
\end{equation*}
Moreover, these expressions can be differentiated with respect to $y$ without changing the order of the remainder.
\end{lemma}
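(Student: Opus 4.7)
The plan is to translate the problem to the chart $\tilde{x}=1$, where the dynamics is given by system \eqref{eq-blow-up-u1}, and to invoke the standard transition-map machinery near a semi-hyperbolic line of equilibria. Since both transition maps stay inside the region $\varepsilon_{3}\leq 1$, one has $\varphi(1/\varepsilon_{3}^{2})=1$, so the right-hand side of \eqref{eq-blow-up-u1} is smooth on the closed set of interest. Because $\rho_{3}\varepsilon_{3}=\varepsilon$ is a first integral along orbits, the transition is essentially a two-dimensional problem parametrised by $\varepsilon$. The two invariant planes $\{\rho_{3}=0\}$ and $\{\varepsilon_{3}=0\}$ intersect along the line of singularities $L=\{\rho_{3}=\varepsilon_{3}=0\}$, whose points are semi-hyperbolic with eigenvalues $\pm H_{\widetilde\alpha,\widetilde\mu}(0,y,0)=\pm(y-y_{(0,\widetilde\mu)}^{X})/2$ and one zero eigenvalue along $y$, except at the degenerate point $T_{(0,\widetilde\mu)}^{X}$.

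The key observation is that both $\sigma_{1}^{+},\sigma_{2}^{+}$ (whose $y$-values accumulate near $\widetilde{y}>y_{(0,\widetilde\mu)}^{X}$) and $\sigma_{3}^{+},\sigma_{4}^{+}$ (with $y$ near $\xi_{X}(\widetilde{y})<y_{(0,\widetilde\mu)}^{X}$) are supported over compact $y$-intervals strictly bounded away from $y_{(0,\widetilde\mu)}^{X}$. Hence the transitions $\Pi_{2}^{+}$ and $\Pi_{4}^{+}$ occur over open neighbourhoods of points of $L$ where $H\neq 0$, i.e.\ where the saddle structure is non-degenerate; the degenerate passage through $T_{(0,\widetilde\mu)}^{X}$ has already been handled separately in Lemma \ref{lemma-p5+}. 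I would then apply the normal-form theorem for semi-hyperbolic equilibria: after a local smooth change of coordinates near each point of $L\cap \{y\neq y_{(0,\widetilde\mu)}^{X}\}$, the system is conjugate to a standard saddle in $(\rho_{3},\varepsilon_{3})$ with $y$ entering as a regular parameter, exactly as in \cite[Subsection 4.2]{GucwaSzmolyan} and \cite[Appendix B]{HuzakKristiansen}.

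The functions $g_{2}^{+}$ and $g_{4}^{+}$ are then identified with the singular-limit transitions: setting $\varepsilon=0$, one concatenates a piece of orbit in the invariant plane $\{\rho_{3}=0\}$ (on which $y$ is constant and $\varepsilon_{3}$ decreases to $0$) with a piece of orbit in the invariant plane $\{\varepsilon_{3}=0\}$ (on which $y$ evolves according to the reduced $y$-equation in \eqref{eq-blow-up-u1} and $\rho_{3}$ grows from $0$ to $\tilde{\rho}_{3}$); the resulting endpoint map is smooth in $(y,\widetilde\alpha,\widetilde\mu)$ by smooth dependence on initial data. For $\varepsilon>0$ one compares the true orbit, which stays on the level set $\rho_{3}\varepsilon_{3}=\varepsilon$, to this singular concatenation. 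The main obstacle is obtaining a sharp error estimate through the corner: a naive Gronwall bound gives only $O(1)$, and one must use the normal form to decouple the hyperbolic $(\rho_{3},\varepsilon_{3})$-dynamics from the slowly varying $y$-direction. The logarithmic factor appears because the time spent inside a fixed neighbourhood of $L$ is of order $\ln\varepsilon^{-1}$, while $\dot y=O(\rho_{3}^{2})$ contributes a $y$-drift of order $\varepsilon\ln\varepsilon^{-1}$; this is precisely the computation carried out in \cite[Subsections 3.3 and 3.5]{WangZhang} and \cite[Appendix B]{HuzakKristiansen}, which I would cite and apply. The statement about differentiability in $y$ without worsening the remainder is then obtained by the same argument applied to the variational equation, which inherits the same semi-hyperbolic structure.
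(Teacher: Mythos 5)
Your proposal is correct and follows essentially the same route as the paper, which gives no proof of this lemma beyond citing the same three references you invoke (Gucwa--Szmolyan, Appendix B of Huzak--Kristiansen, and Wang--Zhang); your outline of the semi-hyperbolic corner passage away from the degenerate point, the conserved quantity $\rho_{3}\varepsilon_{3}=\varepsilon$, the identification of $g_{2,4}^{+}$ with the singular-limit concatenation, and the $O(\varepsilon\ln\varepsilon^{-1})$ drift estimate is exactly what those references establish. One cosmetic slip: the degenerate point $T^{X}_{(0,\widetilde\mu)}$ is not ``handled separately in Lemma \ref{lemma-p5+}'' (that lemma concerns the turning point on top of the cylinder) --- it is simply avoided here, since both passages occur at $y$-values bounded away from $y^{X}_{(0,\widetilde\mu)}$.
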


\begin{remark}
{\rm In fact, Lemma \ref{lemma-p24+} gives the $y$-component of the transition maps $\Pi_{2,4}^{+}$. The $\varepsilon_{3}$ component of $\Pi_{2}^{+}$ is $\frac{\varepsilon}{\widetilde{\rho}_{3}}$ and the $\rho_{3}$ component of $\Pi_{4}^{+}$ is given by $\varepsilon\sqrt{\bar{x}_{2}}$. This can be checked using the expressions of the transversal sections.}    
\end{remark}

With completely analogous constructions, one can define the transversal sections $\sigma_{i}^{-}$, $i = 1,\dots,4$ on the left-hand side of the Figure \ref{fig-transversal-sections}(a). One also can state completely analogous statements for the transition maps $\Pi_{i}^{-}:\sigma_{i-1}^{-}\rightarrow\sigma_{i}^{-}$ as those given by the previous Lemmas (in backward time). 

\subsection{Right and left-hand transition maps and difference map} Now, we are able to define the \emph{right} and \emph{left-hand transition maps} $\Pi_{\widetilde\mu}^{\pm}:\sigma_{0}\rightarrow\sigma_{5}$ by
$$\Pi_{\widetilde\mu}^{\pm}(y,\varepsilon,\widetilde\alpha) = \Pi_{5}^{\pm}\circ\Pi_{4}^{\pm}\circ\Pi_{3}^{\pm}\circ\Pi_{2}^{\pm}\circ\Pi_{1}^{\pm}(y,\varepsilon,\widetilde\alpha,\widetilde\mu),$$
whose properties are given in the next proposition.

In what follows, we define the integrals
\begin{equation}\label{eq-integrals-proof}
 J_{\tilde \mu}^{+}(y):=I_{5}^{+}(\xi_{X}(y), \tilde\mu) < 0, \quad \text{and} \quad J_{\tilde \mu}^{-}(y):=I_{5}^{-}(\xi_{Y}(y),\tilde\mu) < 0,   
\end{equation}
where $\xi_{X}$ and $\xi_{Y}$ are the half return maps defined in Section \ref{sec-candidates} and $I^{\pm}_{5}$ are defined in \eqref{eq-sdi-i5}.

\begin{remark}\label{remark-derivative}
{\rm Note that the derivative of $J_{\widetilde\mu}^{\pm}$ with respect to the variable $y$ is positive due to the chosen parametrization of $\sigma_{0}$.}   
\end{remark}

\begin{proposition}\label{prop-transitions}
The \emph{right} and \emph{left-hand transition maps} $\Pi_{\widetilde\mu}^{\pm}:\sigma_{0}\rightarrow\sigma_{5}$ are given by
\begin{equation}\label{eq-prop-full-transition}
\Pi_{\widetilde\mu}^{\pm}(y,\varepsilon,\widetilde\alpha) = f_{\widetilde\mu}^{\pm}(\varepsilon,\widetilde\alpha) - \operatorname{exp}\Bigg{(}\displaystyle\frac{J_{\widetilde\mu}^{\pm}(y) + o^{\pm}(1)}{\varepsilon^{2}}\Bigg{)},    \end{equation}
in which $f_{\widetilde\mu}^{\pm}$ are smooth functions satisfying $(f_{\widetilde\mu}^{+} - f_{\widetilde\mu}^{-})(0,0) = 0$ and $\frac{\partial (f_{\widetilde\mu}^{+} - f_{\widetilde\mu}^{-})}{\partial \widetilde\alpha}(0,0) \neq 0$. In addition, $J^{\pm}_{\widetilde\mu}$ are given by Equation \eqref{eq-integrals-proof}. Finally, $o^{\pm}(1)$ tends to zero uniformly as $\varepsilon \rightarrow 0$.
\end{proposition}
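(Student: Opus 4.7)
The plan is to compose the five transitions $\Pi_1^{\pm},\dots,\Pi_5^{\pm}$ furnished by Lemmas \ref{lemma-p1+}, \ref{lemma-p5+}, \ref{lemma-p3+} and \ref{lemma-p24+}, and to observe that only $\Pi_5^{\pm}$ contributes an exponentially flat term while the other four transitions merely perturb the argument of $\Pi_5^{\pm}$ in a controlled way.

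First I would compose $\Pi_4^{+}\circ\Pi_3^{+}\circ\Pi_2^{+}\circ\Pi_1^{+}$. The maps $\Pi_1^{+}$ and $\Pi_3^{+}$ are smooth, and by Lemma \ref{lemma-p24+} the Dulac-type maps $\Pi_2^{+}$ and $\Pi_4^{+}$ have expansions of the shape ``smooth limit plus $O(\varepsilon\ln\varepsilon^{-1})$'' that may be differentiated once in $y$ without changing the order of the remainder. Consequently the $y$-component of the composition is
\begin{equation*}
Y^{+}(y,\varepsilon,\widetilde\alpha,\widetilde\mu)=h^{+}(y,\widetilde\alpha,\widetilde\mu)+O(\varepsilon\ln\varepsilon^{-1}),
\end{equation*}
with $h^{+}$ smooth and $h^{+}(y,0,\widetilde\mu)=\xi_{X}(y)$. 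The latter identity holds because at $\varepsilon=0$, $\widetilde\alpha=0$ the orbit realised by $\Pi_4^{+}\circ\Pi_3^{+}\circ\Pi_2^{+}\circ\Pi_1^{+}$ is precisely the half-orbit of $X_{(0,\widetilde\mu)}$ defining $\xi_{X}$ in Section \ref{sec-candidates}, and the parametrisation of $\sigma_4^{+}$ by $y$ is inherited via the fast foliation on the chart $\tilde\varepsilon=1$.

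Next I would substitute $Y^{+}$ into the expression for $\Pi_5^{+}$ given in Lemma \ref{lemma-p5+}. By $C^{\infty}$-smoothness of $y\mapsto I_5^{+}(y,\widetilde\mu)$, Taylor expansion yields
\begin{equation*}
I_5^{+}(Y^{+},\widetilde\mu)=I_5^{+}(\xi_{X}(y),\widetilde\mu)+O(\widetilde\alpha)+O(\varepsilon\ln\varepsilon^{-1})=J_{\widetilde\mu}^{+}(y)+o(1),
\end{equation*}
uniformly on compact sets in $y$ for $\widetilde\alpha$ confined to a small neighbourhood of $0$. The two remainder contributions inside the numerator of the exponent in Lemma \ref{lemma-p5+}, namely $\Psi_1^{+}=O(\varepsilon)$ and $\Psi_2^{+}\,\varepsilon^2\ln\varepsilon$, are likewise $o(1)$. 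Gathering all these corrections into a single $o^{+}(1)$ yields formula \eqref{eq-prop-full-transition} with $f_{\widetilde\mu}^{+}(\varepsilon,\widetilde\alpha):=f_5^{+}(\varepsilon,\widetilde\alpha,\widetilde\mu)$. The construction of $\Pi_{\widetilde\mu}^{-}$ is identical after replacing $\xi_{X}$ by $\xi_{Y}$ and using the flow of $Y_{(0,\widetilde\mu)}$ in backward time (chart $\tilde{x}=-1$ in place of $\tilde{x}=1$).

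Finally, the two conditions on $f_{\widetilde\mu}^{+}-f_{\widetilde\mu}^{-}$ reduce to the standard analysis of the generic turning point of the rescaled system near the origin. At $(\varepsilon,\widetilde\alpha)=(0,0)$ the right- and left-hand limiting trajectories in the inner (rescaling) chart of \cite{DeMaesschalckDumortier1} land at the same point of $\sigma_5$, forcing $(f_{\widetilde\mu}^{+}-f_{\widetilde\mu}^{-})(0,0)=0$. The non-degeneracy $\partial_{\widetilde\alpha}(f_{\widetilde\mu}^{+}-f_{\widetilde\mu}^{-})(0,0)\neq 0$ is precisely the statement that $\widetilde\alpha$ is a regular breaking parameter for the normal form \eqref{sfHopfNormal}, which follows from assumption \textbf{(A0)} and the reduction in Section \ref{subsec-hopf}; it can be read off \cite[Theorem 4]{DeMaesschalckDumortier1}. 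I expect the main technical obstacle to be the bookkeeping across the Dulac transitions $\Pi_2^{\pm}$ and $\Pi_4^{\pm}$: specifically, ensuring that the perturbation of the argument of $I_5^{\pm}$ remains sufficiently small, jointly in $(\varepsilon,\widetilde\alpha)$, to justify the Taylor expansion inside the exponent. The asymptotic exponential structure itself is already dictated by Lemma \ref{lemma-p5+}, so no new singular-perturbation analysis is needed beyond that lemma.
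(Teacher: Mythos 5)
Your proposal is correct and follows essentially the same route as the paper: compose the transition maps from Lemmas \ref{lemma-p1+}--\ref{lemma-p24+} (and their backward-time analogues), absorb all corrections to the argument of the slow divergence integral into the $o^{\pm}(1)$ term using that the composition $\Pi_{4}^{\pm}\circ\Pi_{3}^{\pm}\circ\Pi_{2}^{\pm}\circ\Pi_{1}^{\pm}$ reduces to $\xi_{X}$ (resp.\ $\xi_{Y}$) at $\varepsilon=0$, and obtain the stated properties of $f_{\widetilde\mu}^{+}-f_{\widetilde\mu}^{-}$ from \cite[Theorem 4]{DeMaesschalckDumortier1} via the regular breaking parameter $\widetilde\alpha$. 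The only cosmetic point is that your intermediate $O(\widetilde\alpha)$ correction is really $O(\varepsilon\widetilde\alpha)$, since the $\widetilde\alpha$-dependence enters only through $\alpha=\varepsilon\widetilde\alpha$, which is precisely why it is uniformly $o(1)$ as $\varepsilon\to 0$ for $\widetilde\alpha$ in a compact neighbourhood of $0$.
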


\begin{proof}

Using Lemmas \ref{lemma-p1+}--\ref{lemma-p24+} and similar results in backward time, we see that the $y$ component of the transition maps $\Pi_{\widetilde\mu}^{\pm}$ can be written as \eqref{eq-prop-full-transition}. The properties of $f_{\widetilde\mu}^{+} - f_{\widetilde\mu}^{-}$ follow from \cite[Theorem 4]{DeMaesschalckDumortier1} ($\widetilde\alpha$ is a regular breaking parameter and $\sigma_{5}$ is parameterized by the rescaled variable $\hat y$).
\end{proof}

\begin{remark}
{\rm In fact, the smooth function $f_{\widetilde\mu}^{+}$ in Equation \eqref{eq-prop-full-transition} is the smooth function $f_{5}^{+}$ given in Lemma \ref{lemma-p5+}. From now on, we work with the notation $f_{\widetilde\mu}^{+}$ for simplicity sake. The same remark on the notation of $f_{\widetilde\mu}^{-}$ and $f_{5}^{-}$ holds.}
\end{remark}

\begin{remark}
 {\rm Note that for $\varepsilon = 0$ the composition $\Pi_{4}^{\pm}\circ\Pi_{3}^{\pm}\circ\Pi_{2}^{\pm}\circ\Pi_{1}^{\pm}(y,\varepsilon,\widetilde\alpha,\widetilde\mu)$ in forward (resp. backward) time is given by the half return map $\xi_{X}$ (resp. $\xi_{Y}$).}    
\end{remark}

Define the \emph{difference map} $\Pi_{\widetilde\mu}$ as $\Pi_{\widetilde\mu} = \Pi_{\widetilde\mu}^{+} - \Pi_{\widetilde\mu}^{-}$. It follows from the construction that zeros of the difference map for $\varepsilon > 0$ correspond to periodic orbits of $\widetilde{Z}^{\varphi}_{\varepsilon,\widetilde\alpha,\widetilde\mu}$. In what follows, we prove that zeros of $\Pi_{\widetilde\mu}$ are related to zeros of the slow divergence integral $J_{\widetilde\mu}(y) = (J_{\widetilde\mu}^{+} - J_{\widetilde\mu}^{-})(y)$. In summary, we prove that the slow divergence integral can indeed be applied in order to detect limit cycles of $\widetilde{Z}^{\varphi}_{\varepsilon,\widetilde\alpha,\widetilde\mu}$.

\begin{mtheorem}\label{theo-main}
\emph{\textbf{(Terminal case)}} Denote $J_{\widetilde\mu}(y) = (J_{\widetilde\mu}^{+} - J_{\widetilde\mu}^{-})(y)$.
\begin{itemize}
    \item[\textbf{(a)}] Suppose that $\widetilde{y}\in\sigma_{0}$ is a zero of multiplicity $k$ of $J_{\widetilde\mu_{0}}(y)$. Then $\widetilde{Z}_{\varepsilon,\tilde\alpha,\widetilde\mu}^{\varphi}$ has at most $k+1$ limit cycles for $\varepsilon > 0$ sufficiently small, $\widetilde{\alpha}$ close to $0$ and $\widetilde{\mu}$ kept near $\widetilde{\mu}_{0}$, which are Hausdorff close to $\Upsilon^{\widetilde{y}}$.  
    \item[\textbf{(b)}] Assume that $J_{\widetilde\mu_0}(y)$ has exactly $k$ simple zeros $y_{1} <\dots < y_{k}$ in $\sigma_{0}$. Let $y_{0}\in\sigma_{0}$ satisfying $y_{0} < y_{1}$. Then there is a smooth function $\widetilde\alpha = \widetilde\alpha(\varepsilon,\widetilde\mu)$ satisfying $\widetilde\alpha(0,\widetilde\mu) = 0$ such that $\tilde{Z}_{\varepsilon,\widetilde\alpha(\varepsilon,\widetilde\mu),\widetilde\mu}^{\varphi}$ has $k+1$ periodic orbits $\Upsilon_{\varepsilon,\widetilde\mu}^{y_{i}}$, with $i=0,\dots,k$, for each $\varepsilon > 0$ sufficiently small. In addition, for $i = 0,\dots,k$, the periodic orbit $\Upsilon_{\varepsilon,\widetilde\mu}^{y_{i}}$ is isolated, hyperbolic and Hausdorff close to the canard cycle $\Upsilon^{y_{i}}$.
\end{itemize}

\end{mtheorem}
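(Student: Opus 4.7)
The proof revolves around the zeros in $y$ of the difference map $\Pi_{\widetilde{\mu}}=\Pi_{\widetilde{\mu}}^{+}-\Pi_{\widetilde{\mu}}^{-}$ from Proposition \ref{prop-transitions}:
\[
\Pi_{\widetilde{\mu}}(y,\varepsilon,\widetilde{\alpha}) \;=\; \Delta f(\varepsilon,\widetilde{\alpha}) \;-\; E^{+}(y,\varepsilon) \;+\; E^{-}(y,\varepsilon),
\]
with $\Delta f := f_{\widetilde{\mu}}^{+}-f_{\widetilde{\mu}}^{-}$ satisfying $\Delta f(0,0)=0$ and $\partial_{\widetilde{\alpha}}\Delta f(0,0)\neq 0$, and $E^{\pm}(y,\varepsilon) := \exp\!\bigl((J_{\widetilde{\mu}}^{\pm}(y)+o^{\pm}(1))/\varepsilon^{2}\bigr)$. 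Isolated zeros of $\Pi_{\widetilde{\mu}}$ correspond bijectively to periodic orbits of $\widetilde Z^{\varphi}_{\varepsilon,\widetilde\alpha,\widetilde\mu}$ Hausdorff close to canard cycles $\Upsilon^{y}$.

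For part (a) I would apply Rolle's theorem. Dividing $\partial_{y}\Pi_{\widetilde{\mu}}$ by the positive factor $E^{-}/\varepsilon^{2}$ and taking logarithms (allowed since $(J_{\widetilde{\mu}}^{\pm})'>0$ by Remark \ref{remark-derivative}), the equation $\partial_{y}\Pi_{\widetilde{\mu}}=0$ is equivalent to
\[
J_{\widetilde{\mu}}(y) \;=\; \varepsilon^{2}\ln\!\frac{(J_{\widetilde{\mu}}^{-})'(y)+o(1)}{(J_{\widetilde{\mu}}^{+})'(y)+o(1)} \;+\; o(1),
\]
whose right-hand side tends to $0$ uniformly in $y$ as $\varepsilon\to 0$. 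Since $\widetilde{y}$ is a zero of $J_{\widetilde{\mu}_{0}}$ of multiplicity $k$, a standard Weierstrass-type preparation argument yields at most $k$ solutions near $\widetilde{y}$ for $\varepsilon>0$ small and $\widetilde{\mu}$ near $\widetilde{\mu}_{0}$. Rolle's theorem then bounds the zeros of $\Pi_{\widetilde{\mu}}$ by $k+1$, proving (a).

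For part (b) I would first fix $\widetilde{\alpha}(\varepsilon,\widetilde{\mu})$ by the implicit function theorem applied to $\Pi_{\widetilde{\mu}}(y_{0},\varepsilon,\widetilde{\alpha})=0$, i.e.\ $\Delta f(\varepsilon,\widetilde{\alpha})=h(y_{0},\varepsilon)$ with $h := E^{+}-E^{-}$; since $\partial_{\widetilde{\alpha}}\Delta f(0,0)\neq 0$ and $h(y_{0},0)=0$, this yields a smooth solution with $\widetilde{\alpha}(0,\widetilde{\mu})=0$, so that $\Pi_{\widetilde{\mu}}(y)=h(y_{0})-h(y)$. By the reduction of part (a), $\partial_{y}h$ has exactly $k$ zeros $\widetilde{y}_{1}(\varepsilon)<\cdots<\widetilde{y}_{k}(\varepsilon)$ converging to $y_{1},\dots,y_{k}$ (one per simple zero of $J_{\widetilde{\mu}_{0}}$ via the implicit function theorem applied to the reduced equation), dividing $\sigma_{0}$ into $k+1$ monotonicity intervals of $h$. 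A short computation gives
\[
h(\widetilde{y}_{i}) \;=\; E^{-}(\widetilde{y}_{i})\Big(\tfrac{(J_{\widetilde{\mu}}^{-})'(\widetilde{y}_{i})}{(J_{\widetilde{\mu}}^{+})'(\widetilde{y}_{i})}-1\Big) + \text{h.o.t.},
\]
whose sign coincides with that of $-J_{\widetilde{\mu}}'(y_{i})$ and therefore alternates with $i$.

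The final step is a size comparison relying on $E^{-}$ being strictly increasing in $y$ (since $J_{\widetilde{\mu}}^{-}$ is strictly increasing by Remark \ref{remark-derivative}): one has $|h(y_{0})|\ll|h(\widetilde{y}_{i})|$ for each $i=1,\dots,k$, and also $|h(y)|<|h(y_{0})|$ for $y\in\sigma_{0}$ with $y<y_{0}$. Combined with the alternating signs of the $h(\widetilde{y}_{i})$, this places $h(y_{0})$ strictly in the interior of the range of $h$ on each of the $k+1$ monotonicity intervals, so the intermediate value theorem produces exactly one zero of $h-h(y_{0})$ per interval, totalling $k+1$ zeros of $\Pi_{\widetilde{\mu}}$. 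The zero in the leftmost interval is $y_{0}$ by construction, while the zero in each subsequent interval lies close to the unique zero of $h$ in that interval and thus converges to the corresponding simple zero $y_{i}$ as $\varepsilon\to 0$, establishing Hausdorff closeness to $\Upsilon^{y_{i}}$; hyperbolicity follows since each zero lies in the interior of a monotonicity interval, so $\partial_{y}\Pi_{\widetilde{\mu}}=-\partial_{y}h\neq 0$ there. The main technical obstacle is ensuring that the remainders $o^{\pm}(1)$ in Proposition \ref{prop-transitions} are smooth in $y$ and can be differentiated without spoiling the $o(1)$ control, which is needed both for the Rolle reduction and for the implicit function theorem applications; this is provided by the smoothness of the component transition maps in Lemmas \ref{lemma-p1+}--\ref{lemma-p24+} and the asymptotic expansions of \cite[Theorem 4]{DeMaesschalckDumortier1}.
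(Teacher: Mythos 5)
Your part (a) is essentially the paper's argument: Rolle's theorem plus the observation that, after absorbing the (positive) derivative factors into the exponents, $\partial_y\Pi_{\widetilde\mu}=0$ reduces to $J_{\widetilde\mu}(y)+o(1)=0$, which has at most $k$ roots near a multiplicity-$k$ zero; like the paper, you implicitly need the remainders $o^{\pm}(1)$ to be $o(1)$ after differentiation in $y$, which you correctly flag as the technical crux. For part (b) you take a genuinely different and more self-contained route. The paper chooses $\widetilde\alpha(\varepsilon,\widetilde\mu)$ so that $f^{+}_{\widetilde\mu}-f^{-}_{\widetilde\mu}=0$, obtains the $k$ hyperbolic cycles from persistence of the simple zeros of $J_{\widetilde\mu_0}$, and then produces the extra (smallest) cycle at $y_0$ by a further adjustment of $f^{\pm}_{\widetilde\mu}$, deferring the details to \cite{DeMaesschalckHuzak,Dumortier}. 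You instead tune $\widetilde\alpha$ once, via $\Delta f=h(y_0)$, so that $y_0$ is a zero of $\Pi_{\widetilde\mu}$ by construction, and then count solutions of $h(y)=h(y_0)$ by splitting $\sigma_0$ into the $k+1$ monotonicity intervals of $h$, using the alternating signs and large magnitudes of $h(\widetilde y_i)$ relative to $|h(y_0)|$ (the key inequality $\max(J^{+},J^{-})(y_0)<J^{-}(y_i)$ follows from monotonicity of $J^{\pm}$ and $J^{+}(y_i)\approx J^{-}(y_i)$). This buys an explicit, reference-free proof of the existence and location of all $k+1$ cycles at once, at the cost of a longer sign analysis. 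One point you should make explicit: for the rightmost interval $(\widetilde y_k,\sup\sigma_0)$ the right endpoint is not a critical point, so the alternation of the $h(\widetilde y_i)$ does not by itself place $h(y_0)$ in the interior of the range of $h$ there; you need the additional observation that for $y$ beyond $y_k$ and bounded away from it, $\operatorname{sgn}h(y)=\operatorname{sgn}J(y)=\operatorname{sgn}J'(y_k)=-\operatorname{sgn}h(\widetilde y_k)$, so $h$ does change sign on that interval and the intermediate value theorem applies. With that one-line addition (and the analogous justification that the solution in each interval converges to the left endpoint $y_i$, which follows from $|h|$ being comparable to $\exp(\max(J^{+},J^{-})(y)/\varepsilon^2)$ away from zeros of $J$), your argument is complete and correct.
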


\begin{proof}
\textbf{Item (a).} By Rolle's Theorem, if $\Pi_{\widetilde\mu}'(y,\varepsilon,\widetilde\alpha)$ has $k$ zeros counting multiplicity, then $\Pi_{\widetilde\mu}(y,\varepsilon,\widetilde\alpha)$ has at most $k+1$ zeros. In other words, if $\Pi_{\widetilde\mu}'(y,\varepsilon,\widetilde\alpha)$ has $k$ zeros, then $\widetilde{Z}^{\varphi}_{\varepsilon,\widetilde\alpha,\widetilde\mu}$ has at most $k+1$ limit cycles. In what follows we work with $\Pi_{\widetilde\mu}'(y)$ (the prime $'$ denotes the derivative with respect to $y$).

Using Proposition \ref{prop-transitions}, one has that
$$\operatorname{exp}\Bigg{(}\displaystyle\frac{J_{\widetilde\mu}^{-}(y) + o^{-}(1)}{\varepsilon^{2}}\Bigg{)}\frac{1}{\varepsilon^{2}}\frac{\partial}{\partial y}\Big{(}J_{\widetilde\mu}^{-} + o^{-}(1)\Big{)} - \operatorname{exp}\Bigg{(}\displaystyle\frac{J_{\widetilde\mu}^{+}(y) + o^{+}(1)}{\varepsilon^{2}}\Bigg{)}\frac{1}{\varepsilon^{2}}\frac{\partial}{\partial y}\Big{(}J_{\widetilde\mu}^{+} + o^{+}(1)\Big{)}.$$

Now, using Remark \ref{remark-derivative}, we can rewrite
$$
\frac{1}{\varepsilon^{2}}\frac{\partial}{\partial y}\Big{(}J_{\widetilde\mu}^{\pm} + o^{\pm}(1)\Big{)} = \operatorname{exp}\Bigg{(}\frac{\varepsilon^{2}\ln(\frac{1}{\varepsilon^{2}}\frac{\partial}{\partial y}(J_{\widetilde\mu}^{\pm} + o^{\pm}(1)))}{\varepsilon^{2}}\Bigg{)} = \operatorname{exp}\Bigg{(}\frac{o^{\pm}(1)}{\varepsilon^{2}}\Bigg{)},
$$
and therefore
$$\Pi_{\widetilde\mu}'(y,\varepsilon,\widetilde{\alpha}) = \operatorname{exp}\Bigg{(}\displaystyle\frac{J_{\widetilde\mu}^{-}(y) + o^{-}(1)}{\varepsilon^{2}}\Bigg{)} - \operatorname{exp}\Bigg{(}\displaystyle\frac{J_{\widetilde\mu}^{+}(y) + o^{+}(1)}{\varepsilon^{2}}\Bigg{)},$$
where $o(1)^\pm$ tend to zero as $\varepsilon$ tends to zero, uniformly in $y$, $\widetilde\mu$ and $\widetilde\alpha$.

It follows that, for $\varepsilon > 0$, $\Pi_{\widetilde\mu}'(y,\varepsilon,\widetilde{\alpha}) = 0$ if, and only if, 
\begin{equation}\label{eq-proof-roots}
J_{\widetilde\mu}^{+}(y) - J_{\widetilde\mu}^{-}(y) + o(1) = 0,
\end{equation}
in which $o(1) = o^{+}(1) - o^{-}(1)$. Since we assume that $\tilde y$ is a zero of multiplicity $k$ of $J_{\widetilde\mu_0}$, then Equation \eqref{eq-proof-roots} has at most $k$ zeros (counting multiplicity) near $y=\tilde y$, for each $(\varepsilon, \widetilde{\alpha}, \widetilde{\mu})$ kept  near $(0, 0,\widetilde{\mu}_{0})$. This implies that $\Pi'$ has at most $k$ zeros (counting multiplicity) near $y=\tilde y$, for each $(\varepsilon, \widetilde{\alpha}, \widetilde{\mu})$ kept  near $(0, 0,\widetilde{\mu}_{0})$ and $\varepsilon > 0$. This completes the proof of  Item (a).

\textbf{Item (b).} From Proposition \ref{prop-transitions} and the Implicit Function Theorem, it follows that there is a smooth function $\widetilde\alpha(\varepsilon,\widetilde\mu)$ such that $\widetilde\alpha(0,\widetilde\mu) = 0$ and $(f_{\widetilde\mu}^{+} - f_{\widetilde\mu}^{-})\big{(}\varepsilon,\widetilde\alpha(\varepsilon,\widetilde\mu)\big{)} = 0$ for $\varepsilon \geq 0$ sufficiently small. One can rewrite the difference map $\Pi_{\widetilde\mu}$ as
$$\Pi_{\widetilde\mu}\big{(}y,\varepsilon,\widetilde\alpha(\varepsilon,\widetilde\mu)\big{)} = \operatorname{exp}\Bigg{(}\displaystyle\frac{J_{\widetilde\mu}^{-}(y) + o^{-}(1)}{\varepsilon^{2}}\Bigg{)} - \operatorname{exp}\Bigg{(}\displaystyle\frac{J_{\widetilde\mu}^{+}(y) + o^{+}(1)}{\varepsilon^{2}}\Bigg{)},$$
and $o^{\pm}(1)\rightarrow 0$ as $\varepsilon\rightarrow 0$, uniformly in $y$ and $\widetilde{\mu}$. Therefore, zeros of $\Pi_{\widetilde\mu}\big{(}y,\varepsilon,\widetilde\alpha(\varepsilon,\widetilde\mu)\big{)}$ with respect to the $y$ variable are given by 
$$J_{\widetilde\mu}^{-}(y) - J_{\widetilde\mu}^{+}(y) + o(1) = 0,$$
with $o(1) = o^{-}(1) - o^{+}(1)$ and obviously $o(1)\rightarrow 0$ as $\varepsilon\rightarrow 0$, uniformly in $y$ and $\widetilde{\mu}$. Suppose that $y_{1},\dots,y_{k}\in\sigma_{0}$ are simple zeros of $J_{\widetilde\mu_0}(y)$, and write $y_{1} < \dots <y_{k}$. For $\varepsilon > 0$ sufficiently small, those simple zeros persist, and therefore we have $k$ simple zeros of the difference map $\Pi_{\widetilde\mu}\big{(}y,\varepsilon,\widetilde\alpha(\varepsilon,\widetilde\mu)\big{)}$. Finally, we conclude that $\widetilde{Z}^{\varphi}_{\varepsilon,\widetilde\alpha(\varepsilon,\widetilde\mu),\widetilde\mu}$ has $k$ hyperbolic limit cycles $\Upsilon_{\varepsilon,\widetilde\mu}^{y_{i}}$, each one Hausdorff close to the canard cycle $\Upsilon^{y_{i}}$, for $i = 1,\dots,k$.

Fix $y_{0}\in\sigma_{0}$ such that $y_{0} < y_{1}$. Using suitable functions $f_{\tilde\mu}^{\pm}$ (see \cite{DeMaesschalckHuzak, Dumortier} for more details), it is possible to construct one extra hyperbolic limit cycle $\Upsilon_{\varepsilon,\widetilde\mu}^{y_{0}}$, and it will be the smallest limit cycle among $\Upsilon_{\varepsilon,\widetilde\mu}^{y_{i}}$, for $i = 0,\dots,k$.
\end{proof}

\begin{remark}\label{remark-limit-cycles}
{\rm In \cite{DeMaesschalckHuzak,Dumortier}, a result similar to Theorem \ref{theo-main}(b) has been proven for red canard cycles in Figure \ref{fig-intro-crossing}(b). More precisely, in our model \eqref{eq-def-regularization-2} such canard cycles can be parameterized by $y\in (0,y^{*})$ where we suppose that  the slow dynamics \eqref{eq-blow-up-e1-slow} along the portion of the critical curve $C_0$ below the line $y=y^{*}$ is regular (that is, negative). We define the following slow divergence integrals: $\overline{I}_{\widetilde\mu}^+(y) < 0$ (resp. $\overline{I}_{\widetilde\mu}^-(y) < 0$), equal to the integral $I_{\tilde\mu}^+(x_2)$ (resp. $I_{\tilde\mu}^-(x_2)$) defined in \eqref{PWSSDI}, where $x_2$ is the $x_2$-coordinate of the $\omega$-limit point (resp. $\alpha$-limit point) of the fast orbit of \eqref{eq-blow-up-e1-fast-sub} through $(x_2,y)=(0,y)$. Now, if $\overline{I}_{\widetilde\mu_{0}}^{+}(y) - \overline{I}_{\widetilde\mu_{0}}^{-}(y)$ has exactly $k$ simple zeros in the open interval $(0,y^*)$, we can produce $k+1$ limit cycles like in our Theorem \ref{theo-main}(b). An important difference lies in the fact that the extra limit cycle found in \cite{DeMaesschalckHuzak,Dumortier} is the biggest, that is, the extra limit cycle surrounds the $k$ limit cycles obtained from the simple zeros. On the other hand, in our Theorem \ref{theo-main}(b), the extra limit cycle is the smallest one.}

\end{remark}

\subsection{Proof of Theorem \ref{thm-center}}\label{application-center}


We consider a PSVF presenting a non-smooth center at $(0,1)\in\Sigma$ given by  
\begin{equation}\label{eq-II2-ns-center}
Z_{\alpha}(x,y) = \left\{
  \begin{array}{rcl}
   X_{\alpha}(x,y) & = & \big{(}y - 1, \alpha - 1\big{)}, \\
   Y_{\alpha}(x,y) & = & \big{(}y - 1, \alpha + 1 \big{)}.
  \end{array}
\right.
\end{equation}

Let $\widetilde{Z}_{\alpha}$ be a continuous combination of \eqref{eq-II2-ns-center} given by
\begin{equation}\label{eq-II2-center-cc}
\widetilde{Z}_{\alpha}(\lambda,x,y) = \left\{
  \begin{array}{rcl}
   \widetilde{Z}_{1}(\lambda,y)  & = & y - \lambda^{2}, \\
   \widetilde{Z}_{2,\alpha}(\lambda)  & = & \alpha - \lambda.
  \end{array}
\right.
\end{equation}

Observe that the continuous combination \eqref{eq-II2-center-cc} is a special case of \eqref{eq-combination-hopf} with $\mu=\alpha$ (we do not need the additional parameter $\widetilde\mu$) and $A_{\mu} = B_{\mu} \equiv 0$. The associated functions $F$ and $G$ (defined in \eqref{eq-def-f-g}) are given by $F(x) = \varphi^{2}(x)$ and $ G(x) = -\varphi(x)$. The critical curve $C_0$ is equal to $\{y=F(x_2)\}$ (see Section \ref{subsec-blow-up-e1}). 
We assume that $\varphi(0) = 0$ and $\varphi'(0) > 0$ (see the assumption \textbf{(A0)}). Moreover, in this section we assume that transition functions are monotonic. Then the assumptions \textbf{(A2)} and \textbf{(A3)} are satisfied on the interval $(-1,1)$. Since $A_{\mu} \equiv 0$, then assumption \textbf{(A1)} is  satisfied.

We focus on limit cycles of the following non-linear regularization (see \eqref{eq-def-regularization-2}):
$$\widetilde{Z}^{\varphi}_{\varepsilon,\widetilde{\alpha}}(x,y) := \widetilde{Z}_{\varepsilon\widetilde{\alpha}}\Big{(}\varphi\Big{(}\frac{x}{\varepsilon^{2}}\Big{)},x,y\Big{)}.$$

In addition, the half return maps of $Z_{0}$ in \eqref{eq-II2-ns-center} are given by $\xi_{X,Y}(y) = 2-y$ and $\xi(y) = y$ (recall that $\xi_{Y}$ is considered in backward time, see also Figure \ref{fig-half-return}). Note that the canard cycle $\Upsilon^{y}$ (see Figure \ref{fig-limit-cycles}(a)) is well-defined for each $y\in(1,2)$. We therefore restrict the half return maps to the interval $(1,2)$.

Our goal is to show that for any integer $k > 0$ there exists a monotonic function $\varphi_{k}$ such that the slow divergence integral $J$ from Theorem \ref{theo-main}(b) has $k$ simple zeros $1 < y_{1} <\dots < y_{k} < 2$. 

Let us first find the expression for $J$. From \eqref{eq-sdi-i5} and \eqref{eq-integrals-proof} it follows that
$$J(y) = I^{+}(x_2^{+}(2-y)) - I^{-}(x_2^{-}(2-y)),$$ where $I^{\pm}$ are defined in \eqref{PWSSDI} (recall that we do not have parameter $\widetilde\mu$). If we use the change of variable $x = x_2^{+}(2-y)$ and write $J(y)$ as $I(x)$, then we have
\begin{equation}\label{eq-II2-center-integral}
I(x) = I^{+}(x)-I^{-}(L(x)) = 4\displaystyle\int_{x}^{L(x)} \varphi(s)\big{(}\varphi'(s)\big{)}^{2}ds, 
\end{equation}
where $x \in (0,1)$ and $L(x) < 0$ satisfies $\varphi^{2}(x)=\varphi^{2}(L(x))$, due to $\xi(y) = y$. Now, it suffices to prove that for any integer $k > 0$ there is a monotonic function $\varphi_{k}$ such that  $I$  has $k$ (positive) simple zeros.

Consider a function $\widetilde{\varphi}(x) = x + \delta\varphi_{e}(x)$, in which $\varphi_{e}$ is an even polynomial satisfying $\varphi_{e}(0) = 0$. We fix a compact interval $[-2\nu,2\nu]\subset (-1,1)$, with a small $\nu > 0$. The function $\widetilde\varphi$ is monotonic in $[-2\nu,2\nu]$ for any $\delta \geq 0$ sufficiently small. Using the definition of $\widetilde\varphi$ and $\widetilde\varphi^{2}(x) = \widetilde\varphi^{2}(L(x))$, one can also write $L(x) = -x + L_{1}(x)\delta + O(\delta^{2})$, where $L_{1}(x) = -2\varphi_{e}(x)$.


Therefore, for $x\in (0,\nu]$ and $\delta \geq 0$ sufficiently small, we have
\begin{equation*}
\begin{array}{rcl}
    \displaystyle\frac{1}{4}I(x) & = &  \displaystyle\int_{x}^{L(x)}\widetilde{\varphi}(s)\big{(}\widetilde{\varphi}'(s)\big{)}^{2}ds = \displaystyle\int_{x}^{L(x)}\big{(}s + \delta\varphi_{e}(s)\big{)}\big{(}1 + \delta\varphi_{e}'(s)\big{)}^{2}ds \\
     & = & \displaystyle\int_{x}^{L(x)}sds + \delta\displaystyle\int_{x}^{L(x)}\big{(}2s\varphi_{e}'(s) + \varphi_{e}(s)\big{)}ds + O(\delta^{2}) \\
     & = & -\delta xL_{1}(x) + \delta\displaystyle\int_{x}^{-x}\big{(}2s\varphi_{e}'(s) + \varphi_{e}(s)\big{)}ds + O(\delta^{2}).
\end{array}    
\end{equation*}

Observe that $2s\varphi_{e}'(s) + \varphi_{e}(s)$ is an even polynomial. Thus
$$
\displaystyle\int_{x}^{-x}\big{(}2s\varphi_{e}'(s) + \varphi_{e}(s)\big{)}ds = - 2\displaystyle\int_{0}^{x}\big{(}2s\varphi_{e}'(s) + \varphi_{e}(s)\big{)}ds,
$$
because we are integrating in a symmetric interval. We also remark that
$$
\displaystyle\int_{0}^{x}s\varphi_{e}'(s)ds = x\varphi_{e}(x)  - \displaystyle\int_{0}^{x}\varphi_{e}(s)ds,
$$
and then we finally have
\begin{equation}\label{eq-teo-c-integral}
\displaystyle\frac{1}{4}I(x) = -2\delta\displaystyle\int_{0}^{x}s\varphi_{e}'(s)ds + O(\delta^{2}).
\end{equation}

We conclude that simple zeros of the integral in the right hand side of \eqref{eq-teo-c-integral} will persist as simple zeros of $I(x)$, for each small but positive $\delta$. 

Given any positive integer $k$, take $a_{1} <\dots < a_{k}$ in the open interval $(0,\nu)$. Define the odd polynomial $P(x) = x^{3}(x^{2} - a_{1}^{2})\dots(x^{2} - a_{k}^{2})$. The even polynomial $\varphi_{e}(x) = \int_{0}^{x}\frac{P'(s)}{s}ds$ satisfies $\int_{0}^{x}s\varphi_{e}'(s)ds= P(x)$ and therefore the integral in the right hand side of \eqref{eq-teo-c-integral} has $k$ simple zeros $a_{1} <\dots < a_{k}$ in $(0,\nu)$. They persist as simple zeros of $I(x)$ in $(0,\nu)$, for positive but small $\delta$. 


The next step is smoothly extend $\widetilde{\varphi}(x)$ to a monotonic transition function $\varphi_{k}$ in the interval $[-1,1]$.
Define a \emph{bump function} $\beta:\mathbb{R}\rightarrow\mathbb{R}$ with support $\operatorname{supp}(\beta) = (-2,2)$ and satisfying $\beta(x) \equiv 1$ in the compact interval $[-1,1]$. Now, define $\beta_{\nu}(x) = \beta\left(\frac{x}{\nu}\right)$. With this construction, we have $\operatorname{supp}(\beta_{\nu}) = (-2\nu,2\nu)$ and $\beta_{\nu}\equiv 1$ in the compact interval $[-\nu,\nu]$. Finally, using the monotonic transition function
$$\psi(x) = \left\{
  \begin{array}{ccc}
   \tanh{\left(\frac{x}{1-x^{2}}\right)} & \text{if} & |x| < 1, \\
    \operatorname{sgn}x& \text{if} & |x| \geq 1,
  \end{array}
\right.$$
we define $\varphi_{k}(x) = \widetilde{\varphi}(x)\beta_{\nu}(x) + \psi(x)\left(1-\beta_{\nu}(x)\right)$.

We claim that $\varphi_{k}$ is a monotonic transition function. Indeed, observe that $\psi$, $\beta_{\nu}$ and consequently $\varphi_{k}$ are $C^{\infty}$-smooth functions. Furthermore, $\varphi_{k} \equiv \widetilde{\varphi}$ and $\varphi_{k} \equiv \psi$ in $[-\nu,\nu]$ and $(-\infty,-2\nu]\cup[2\nu,\infty)$, respectively. Therefore, it remains to check that $\varphi_{k}$ is monotonic in $(-2\nu, -\nu)\cup(\nu,2\nu)$. In order to do this, we can proceed as in the proof of \cite[Theorem 4.3]{HuzakKristiansen} because $\widetilde{\varphi}(0) = \psi(0) = 0$ and $\widetilde{\varphi}'(0) = \psi'(0) = 1$.

Now Theorem \ref{thm-center} follows directly from Theorem \ref{theo-main}(b). The $k + 1$ limit cycles are produced by the canard cycles $\Upsilon^y$ (see Figure \ref{fig-limit-cycles}(a)). Bearing in mind Remark \ref{remark-limit-cycles}, these $k+1$ limit cycles can also be produced by red canard cycles in Figure \ref{fig-intro-crossing}(b). Indeed, we have the same integral $I(x)$ as above and we can construct $k$ simple zeros using the same steps (see also Section \ref{application-II2}). The $k+1$ red limit cycles (as in Figure \ref{fig-intro-crossing}(b)) occur for a new control curve $\widetilde\alpha = \widetilde\alpha(\varepsilon)$. $\Box$

\subsection{Proof of Theorem \ref{thm-ii2}}\label{application-II2}


Consider the linear PSVF (for $\alpha\sim 0)$
\begin{equation}\label{eq-cod1-ii-psvf}
Z_{\alpha}(x,y) = \left\{
  \begin{array}{ll}
   X_{\alpha}(x,y) =  \big{(}y - 1 - \alpha, & \alpha - 1 -x), \\
   Y_{\alpha}(x,y) =  \big{(}y - 1 -x , & \alpha + 1 -x).
     \end{array}
\right.
\end{equation}

Let us describe the phase portrait of $Z_{\alpha}$. The points $P_{\alpha} = (-1 + \alpha, 1 + \alpha)$ and $Q_{\alpha} = (1+\alpha,2+\alpha)$ are (virtual) linear center and attracting focus of $X_{\alpha}$ and $Y_{\alpha}$, respectively. It can also be checked that both $T_{\alpha}^{X} = (0,1+\alpha)$ and $T_{\alpha}^{Y} = (0,1)$ are invisible fold points. Geometrically, it is easy to see that, for $\alpha = 0$, the fold-fold singularity behaves like a non-smooth attracting focus, because $Y_{\alpha}$ has a smooth attracting focus. The segment limited by these fold points is the sliding region $\Sigma^{s}$, and the sewing region is given by $\Sigma^{w} = \Sigma \backslash \big{(}\Sigma^{s} \cup T_{\alpha}^{X} \cup T_{\alpha}^{Y}\big{)}$. The sliding vector field is given by $Z^{\Sigma}_{\alpha}(y) = \alpha^{2} + \alpha - 2y + 2$, whose equilibrium point $S_{\alpha} = \big{(}0, \frac{1}{2}(\alpha^{2} + \alpha + 2)\big{)}$ is repelling for $\alpha < 0$ and attracting for $\alpha > 0$. 


In order to verify that the invisible-invisible fold-fold $(0,1)\in\Sigma$ has codimension $1$, we must compute $\xi_{X,Y}$. One can explicitly solve the system of ODEs associated to $X_{0}$ and obtain $\xi_{X}(y) = 2 - y$. Using Taylor series of order $2$ of the solutions of $Y_{0}$, we obtain
$$\xi_{Y}^{-1}(y) = \frac{-y^{3} + 4y^{2} -8y + 6}{(y-2)^{2}}.$$

Therefore
\begin{equation*}
\begin{array}{rcl}
    \xi(y) & = & \xi_{Y}^{-1}\circ\xi_{X}(y)  = - \frac{2}{y^2} + \frac{4}{y} + y - 2\\
     & = & 1 + (y-1) - 2(y-1)^{2} + 4(y-1)^{3} - 6 (y-1)^{4} + O\left((y-1)^5\right).
\end{array}    
\end{equation*}

The coefficient of $(y-1)^{2}$ is nonzero, therefore this singularity has codimension 1. Moreover, such coefficient is negative, therefore this II$_{2}$ has an attracting focus-like behavior.

A continuous combinations of \eqref{eq-cod1-ii-psvf} is given by
\begin{equation}\label{eq-cod1-ii-combination}
\widetilde{Z}_{\alpha}(\lambda,x,y) = \Big{(}y - \lambda^{2} - \frac{(\alpha - x)\lambda^{m-1}}{2} - \frac{(\alpha + x)\lambda^{m}}{2}, \quad \alpha - \lambda - x\lambda^{n}\Big{)},
\end{equation}
with $m\geq 4$ and $n\geq 2$ being even integers. The continuous combination \eqref{eq-cod1-ii-combination} is a special case of \eqref{eq-combination-hopf} with $\mu =\alpha$, $A_{\alpha}(\lambda,x) = - \frac{(\alpha - x)\lambda^{m-1}}{2} - \frac{(\alpha + x)\lambda^{m}}{2}$ and $B_{\alpha}(\lambda,x,y) = - x\lambda^{n}$. Using \eqref{eq-def-f-g}, one obtains $F(x)=\varphi^2(x)$ and $G(x)=-\varphi(x)$. The critical curve $C_{0}$ is given by $\{y = F(x)\}$. Assuming that $\varphi$ is monotonic and $\varphi(0) = 0$, then the assumptions \textbf{(A0)}-\textbf{(A3)} are satisfied. 

In what follows, we focus on red canard cycles in Figure \ref{fig-intro-crossing}(b) of the regularization \eqref{eq-def-regularization-2}. We believe that the same result is true for big limit cycles (see Figure \ref{fig-limit-cycles}(a)). This case is more technical and it is a topic of further study. 



Using Remark \ref{remark-limit-cycles} and parameterizing the slow divergence integrals by $x\in (0,1)$ instead of $y\in (0,1)$ (we use the change of variable $y = F(x)$), it suffices  to study simple zeros of
$$I(x) = 4\int_{x}^{L(x)}\varphi(s)\left(\varphi'(s)\right)^{2}ds,$$
where $x\in (0,1)$, $L(x) < 0$ and $F(x) = F(L(x))$. This slow divergence integral has the same form as \eqref{eq-II2-center-integral} in the non-smooth center case. Using the same reasoning as in Section \ref{application-center} we can show that for any integer $k > 0$ there is a monotonic
function $\varphi_{k}$ such that $I$ has $k$ (positive) simple zeros. Finally, Remark \ref{remark-limit-cycles} implies that $k+1$ hyperbolic limit cycles can be produced inside the regularization stripe. $\Box$







\section{Analysis of the dodging case}\label{section-dodging-case}

In this section we consider the dodging case defined in Section \ref{sec-candidates} (we suppose that the assumptions \textbf{(A0)}-\textbf{(A3)} are satisfied). In this case, the slow divergence integral can also be used in order to study limit cycles of the regularized
vector field defined in \eqref{eq-def-regularization-2}, produced by canard cycles $\Gamma^{\widetilde{y}}$ in Figure \ref{fig-limit-cycles}(b).

Of course, one must define transversal sections and transition maps for this case. We will not go into details since the construction, lemmas and propositions can be done in an analogous way as it was done in Section \ref{sec-terminal-case}. We refer to Figure \ref{fig-transversal-sections}(b) for the transversal sections $\sigma_{i}^{-}$ for $i=0,\dots,5$. We use the same parameterization of $\sigma_{0,5}$ as in Section \ref{sec-sections-transitions}.

The transition maps $\Pi_{\widetilde{\mu}}^{\pm}: \sigma_{0}\rightarrow\sigma_{5}$ are given by
\begin{equation}\label{eq-transition-dodging}
    \begin{array}{rcl}
         \Pi_{\widetilde{\mu}}^{+}(y,\varepsilon,\widetilde{\alpha}) & = & f_{\widetilde{\mu}}^{+}(\varepsilon,\widetilde{\alpha}) + \operatorname{exp}\Bigg{(}\displaystyle\frac{\overline{I}_{\widetilde{\mu}}^{+}(y) + o^{+}(1)}{\varepsilon^{2}}\Bigg{)},  \\
         \Pi_{\widetilde{\mu}}^{-}(y,\varepsilon,\widetilde{\alpha}) & = & f_{\widetilde{\mu}}^{-}(\varepsilon,\widetilde{\alpha}) -\operatorname{exp}\Bigg{(}\displaystyle\frac{J_{\widetilde{\mu}}^{-}(y) + o^{-}(1)}{\varepsilon^{2}}\Bigg{)}, 
    \end{array}
\end{equation}
in which the smooth functions $f_{\widetilde{\mu}}^{\pm}$ and $o^{\pm}(1)$ satisfy analogous properties to Proposition \ref{prop-transitions},  $J_{\widetilde\mu}^{-}(y)$ is defined in \eqref{eq-integrals-proof} and $\overline{I}_{\widetilde\mu}^{+}(y)$ in Remark \ref{remark-limit-cycles}. The expression for the backward transition map $\Pi_{\widetilde\mu}^{-}$ follows from \eqref{eq-prop-full-transition}, while the expression for the forward transition map $\Pi_{\widetilde\mu}^{+}$, together with the properties of $f_{\widetilde\mu}^{+} - f_{\widetilde\mu}^{-}$ stated in Proposition \ref{prop-transitions}, follows from \cite[Theorem 4]{DeMaesschalckDumortier1}. Observe that we have a plus sign in front of the exponential in the expression of $\Pi_{\widetilde{\mu}}^{+}$. The difference map is then given by $\Pi_{\widetilde{\mu}} = \Pi_{\widetilde{\mu}}^{+} - \Pi_{\widetilde{\mu}}^{-}$ and the derivative of $\overline{I}_{\widetilde\mu}^{+}(y)$ with respect to $y$ is negative, due to the chosen parameterization of $\sigma_0$. See also Remark \ref{remark-derivative}.

We have the following result.

\begin{mtheorem}\label{theo-dodging}
\emph{\textbf{(Dodging case)}} Denote $I_{\widetilde{\mu}}(y) = (\overline{I}_{\widetilde{\mu}}^{+} - J_{\widetilde{\mu}}^{-})(y)$, and let $\widetilde{y}\in\sigma_{0}$. Then $\widetilde{Z}^{\varphi}_{\varepsilon,\widetilde\alpha,\widetilde\mu}$ has at most $2$ limit cycles produced by the canard cycle $\Gamma^{\widetilde{y}}$, for $\varepsilon > 0$ sufficiently small, $\widetilde{\alpha}$ close to zero and $\widetilde{\mu}$ close to $\widetilde{\mu}_{0}$. Moreover, if $y = \widetilde{y}$ is a simple root of $I_{\widetilde{\mu}_{0}}(y)$, then, for each $\varepsilon > 0$ sufficiently small and $\widetilde{\mu}$ close to $\widetilde{\mu}_{0}$, the $\widetilde{\alpha}$-family $\widetilde{Z}^{\varphi}_{\varepsilon,\widetilde\alpha,\widetilde\mu}$ undergoes a saddle-node bifurcation of limit cycles, which are Hausdorff close to $\Gamma^{\widetilde{y}}$.
\end{mtheorem}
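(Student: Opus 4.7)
\emph{Proof proposal.} The plan is to mirror the strategy used for Theorem \ref{theo-main}, but exploiting the crucial sign change induced by the dodging geometry: in \eqref{eq-transition-dodging} the exponential in $\Pi_{\widetilde\mu}^{+}$ enters with a $+$ and in $\Pi_{\widetilde\mu}^{-}$ with a $-$, so after forming the difference the two exponentials enter $\Pi_{\widetilde\mu}=\Pi_{\widetilde\mu}^{+}-\Pi_{\widetilde\mu}^{-}$ with the same sign. First I would differentiate with respect to $y$ and, copying the exponent-absorption trick from the proof of Theorem \ref{theo-main}(a), write the factors $\tfrac{1}{\varepsilon^{2}}\tfrac{\partial}{\partial y}(\overline{I}_{\widetilde\mu}^{+}+o^{+}(1))$ and $\tfrac{1}{\varepsilon^{2}}\tfrac{\partial}{\partial y}(J_{\widetilde\mu}^{-}+o^{-}(1))$ as exponentials of $o(1)/\varepsilon^{2}$. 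Using $\partial_{y}\overline{I}_{\widetilde\mu}^{+}<0$ (Remark \ref{remark-limit-cycles} / comment after \eqref{eq-transition-dodging}) and $\partial_{y}J_{\widetilde\mu}^{-}>0$ (Remark \ref{remark-derivative}), this yields
\[
\Pi_{\widetilde\mu}'(y,\varepsilon,\widetilde\alpha) = -\operatorname{exp}\!\left(\frac{\overline{I}_{\widetilde\mu}^{+}(y)+o^{+}(1)}{\varepsilon^{2}}\right) + \operatorname{exp}\!\left(\frac{J_{\widetilde\mu}^{-}(y)+o^{-}(1)}{\varepsilon^{2}}\right),
\]
so that for $\varepsilon>0$ the equation $\Pi_{\widetilde\mu}'=0$ is equivalent to $I_{\widetilde\mu}(y)+o(1)=0$ with $o(1)=o^{+}(1)-o^{-}(1)$.

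Next I would establish the upper bound. Since $\overline{I}_{\widetilde\mu_{0}}^{+}$ is strictly decreasing and $J_{\widetilde\mu_{0}}^{-}$ is strictly increasing in $y$, the difference $I_{\widetilde\mu_{0}}=\overline{I}_{\widetilde\mu_{0}}^{+}-J_{\widetilde\mu_{0}}^{-}$ is strictly monotone on $\sigma_{0}$ and has at most one zero. Because the $o(1)$ remainder inherited from the smooth transition maps is $C^{1}$-small in $y$ (uniformly in the remaining parameters), the perturbed equation $I_{\widetilde\mu}+o(1)=0$ retains strict monotonicity for every $\varepsilon>0$ small, $\widetilde\alpha$ near $0$ and $\widetilde\mu$ near $\widetilde\mu_{0}$, so $\Pi_{\widetilde\mu}'$ admits at most one zero in $y$. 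Rolle's theorem then yields at most two zeros of $\Pi_{\widetilde\mu}$, i.e.\ at most two limit cycles Hausdorff close to $\Gamma^{\widetilde y}$.

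For the saddle-node assertion, assume that $\widetilde y$ is a (automatically simple, by the monotonicity above) zero of $I_{\widetilde\mu_{0}}$. The Implicit Function Theorem applied to $I_{\widetilde\mu}(y)+o(1)=0$ at $y=\widetilde y$ produces a smooth critical curve $y^{\ast}=y^{\ast}(\varepsilon,\widetilde\alpha,\widetilde\mu)$ with $y^{\ast}(0,0,\widetilde\mu_{0})=\widetilde y$ along which $\Pi_{\widetilde\mu}'$ vanishes. Substituting $y^{\ast}$ into $\Pi_{\widetilde\mu}=0$ and applying the Implicit Function Theorem a second time in the variable $\widetilde\alpha$ — legitimate because $\frac{\partial(f_{\widetilde\mu}^{+}-f_{\widetilde\mu}^{-})}{\partial\widetilde\alpha}(0,0)\neq 0$ (the \emph{regular breaking parameter} property from Proposition \ref{prop-transitions}) and because the exponential contributions to $\Pi_{\widetilde\mu}$ at $y=y^{\ast}$ are exponentially small in $\varepsilon$ — produces the saddle-node control curve $\widetilde\alpha_{sn}(\varepsilon,\widetilde\mu)$.

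Finally I would verify the non-degeneracy condition $\Pi_{\widetilde\mu}''\neq 0$ at the bifurcation point. Differentiating $\Pi_{\widetilde\mu}'$ once more and applying the exponent-absorption trick in reverse, the two resulting terms both come out with the same (positive) sign, so $\Pi_{\widetilde\mu}''>0$ uniformly on the relevant range for small $\varepsilon$. Together with the transversality in $\widetilde\alpha$ used above, this is the standard set of conditions for a saddle-node (fold) bifurcation in the one-parameter family $\widetilde\alpha\mapsto\Pi_{\widetilde\mu}(\cdot,\varepsilon,\widetilde\alpha)$: two hyperbolic limit cycles collide at $\widetilde\alpha=\widetilde\alpha_{sn}$, form a semi-stable one, and disappear. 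I expect the main technical obstacle to be the bookkeeping of the $o(1)$ remainders — establishing that they and their $y$-derivatives are indeed uniformly small so that both the monotonicity of $I_{\widetilde\mu_{0}}$ and both Implicit Function Theorem applications survive the perturbation. This is routine given the smoothness of the dodging analogues of Lemmas \ref{lemma-p1+}--\ref{lemma-p24+}, but must be made explicit since the dodging transition maps differ from those in Section \ref{sec-terminal-case}.
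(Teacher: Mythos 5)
Your proposal is correct and follows essentially the same route as the paper: the upper bound is obtained exactly as in the paper's proof (exponent absorption as in Theorem \ref{theo-main}(a), strict monotonicity of $\overline{I}_{\widetilde{\mu}}^{+}-J_{\widetilde{\mu}}^{-}$ forcing at most one zero of $\partial\Pi_{\widetilde{\mu}}/\partial y$, then Rolle twice). Your explicit saddle-node argument (critical curve via the Implicit Function Theorem, control curve $\widetilde\alpha_{sn}$ via the regular breaking parameter property of $f_{\widetilde\mu}^{+}-f_{\widetilde\mu}^{-}$, and the sign computation giving $\Pi_{\widetilde\mu}''>0$) is precisely the content the paper delegates to Theorem 4.3(3) of \cite{Dumortier}, so there is no gap.
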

\begin{proof}
Firstly, we prove the existence of at most two limit cycles. With a similar reasoning as in the proof of Theorem \ref{theo-main}(a), one can show that, for each small $\varepsilon > 0$,   $\frac{\partial \Pi_{\widetilde{\mu}}}{\partial y}(y) = 0$ is equivalent to
\begin{equation}\label{eq-proof-dodging}
\overline{I}_{\widetilde{\mu}}^{+}(y) - J_{\widetilde{\mu}}^{-}(y) + o(1) = 0,    
\end{equation}
where $o(1) \rightarrow 0$ as $\varepsilon \rightarrow 0$. Notice that the derivative of the left hand side of \eqref{eq-proof-dodging} with respect to $y$ is negative for each $\varepsilon\geq 0$ small enough (we use Remark \ref{remark-derivative} and the fact that the derivative of $\overline{I}_{\widetilde{\mu}}^{+}$ is negative).  Rolle's Theorem implies that \eqref{eq-proof-dodging} has at most $1$ solution (counting multiplicity) near $y = \widetilde{y}$.  Thus, $\frac{\partial \Pi_{\widetilde{\mu}}}{\partial y}(y)$ has at most $1$ zero near $y = \widetilde{y}$. Applying Rolle's Theorem once more, we conclude that $\Gamma^{\widetilde{y}}$ can produce at most $2$ limit cycles. This completes the proof of the first part of Theorem \ref{theo-dodging}. The second part of Theorem \ref{theo-dodging} can be proved in the same fashion as Theorem 4.3(3) in \cite{Dumortier}.

\end{proof}

\section{Acknowledgements}

Peter De Maesschalck is supported by Flanders FWO agency (G0F1822N grant). Renato Huzak is supported by Croatian Science Foundation (HRZZ) grant IP-2022-10-9820. Otavio Henrique Perez is supported by Sao Paulo Research Foundation (FAPESP) grants 2021/10198-9 and 2024/00392-0. 

\section{Conflict of interest}

On behalf of all authors, the corresponding author states that there is no conflict of interest.


\begin{thebibliography}{99}

\bibitem{bernardo} di Bernardo, M., Budd, C.J., Champneys, A.R., Kowalczyk, P. \emph{Piecewise-Smooth Dynamical Systems:
Theory and Applications}. \textbf{Springer Verlag London Ltd.}, London (2008).

\bibitem{BLS} C. Bonet-Reves, J. Larrosa, T.M. Seara. \emph{Regularization around a generic codimension one fold-fold singularity}. \textbf{J. Diff. Eq.} 265 (2018), 1761--1838.

\bibitem{BuzziSilvaTeixeira} C. Buzzi, P.R. Silva, M.A. Teixeira. \emph{A singular approach to discontinuous vector fields on the plane}. \textbf{J. Diff. Eq.} 231 (2006), 633--655.

\bibitem{BuzziCarvalhoTeixeira} C. Buzzi, T. Carvalho, M.A. Teixeira. \emph{Birth of limit cycles bifurcating from a nonsmooth center}. \textbf{J. Math. Pures Appl.} 102 (2014), 36--47.

\bibitem{DeMaesschalckDumortier1} P. De Maesschalck, F. Dumortier. \emph{Time analysis and entry-exit relation near planar turning points}. \textbf{J. Diff. Eq.} 215(2) (2005), 225--267.

\bibitem{DeMaesschalckDumortier2} P. De Maesschalck, F. Dumortier. \emph{Canard solutions at non-generic turning points}. \textbf{Trans. Amer. Math. Soc.} 358(5) (2005),  2291--2334.

\bibitem{DeMaesschalckDumortier3} P. De Maesschalck, F. Dumortier. \emph{Canard cycles in the presence of slow dynamics with singularities}. \textbf{Proc. Roy. Soc. Edinburgh Sect. A} 138 (2008),  265--299.

\bibitem{DMDR} P. De Maesschalck, F. Dumortier, R. Roussarie. \emph{Canard cycles—from birth to transition}, volume 73 of Ergebnisse der Mathematik und ihrer Grenzgebiete. 3. Folge. A Series of Modern Surveys in Mathematics [Results in Mathematics and Related Areas. 3rd Series. A Series of Modern Surveys in Mathematics]. Springer, Cham, (2021).

\bibitem{DeMaesschalckHuzak} P. De Maesschalck, R. Huzak. \emph{Slow divergence integrals in classical Liénard equations near centers}. \textbf{J. Dyn. Diff. Eq.} 27(1) (2015), 177--185.

\bibitem{Dumortier} F. Dumortier. \emph{Slow divergence integral and balanced canard solutions}. \textbf{Qual. Theory Dyn. Syst.} 10(1) (2011), 65--85.

\bibitem{DPR} F. Dumortier, D. Panazzolo, R. Roussarie. \emph{More limit cycles than expected in Liénard equations}. \textbf{Proc. Am. Math. Soc.} 135(6) (2007), 1895--1904.

\bibitem{DR1996} F. Dumortier, R. Roussarie. \emph{Canard cycles and center manifolds}. \textbf{Mem. Am. Math. Soc.} 121(577),
x+100 (1996). (With an appendix by Cheng Zhi Li)

\bibitem{Filippov} A.F. Filippov. \emph{Differential Equations with Discontinuous Right-Hand Sides}. Mathematics and Its Applications (Soviet Series), Kluwer Academic Publishers, Dordrecht, 1988.

\bibitem{GST} M. Guardia, T.M. Seara, M.A. Teixeira. \emph{Generic bifurcations of low codimension of planar Filippov Systems}. \textbf{J. Diff. Eq.} 250 (2011), 1967--2023.

\bibitem{GucwaSzmolyan} I. Gucwa, P. Szmolyan. \emph{Geometric singular perturbation analysis of an autocatalator model}. \textbf{Discrete Contin. Dyn. Syst., Ser}. S 2(4) (2009), 783--806.

\bibitem{HuzakKristiansen} R. Huzak, K.U. Kristiansen. \emph{The number of limit cycles for regularized piecewise polynomial systems is unbounded}. \textbf{J. Diff. Eq.} 342 (2023), 34--62.

\bibitem{HuzakKristiansen2} R. Huzak, K.U. Kristiansen. \emph{Sliding Cycles of Regularized Piecewise Linear Visible–Invisible Twofolds}. \textbf{Qual. Theory Dyn. Syst.} 23 (Suppl 1), 256 (2024).

\bibitem{HuzakKristiansen3} R. Huzak, K.U. Kristiansen, G. Radunovic. \emph{Slow divergence integral in regularized piecewise smooth systems}. \textbf{Electron. J. Qual. Theory Differ. Equ.} Paper No. 15, 20 p. (2024).

\bibitem{Joyce} D. Joyce. \emph{A generalization of manifold with corners}. \textbf{Adv. Math.} 299 (2016), 760--862.

\bibitem{KH} K.U. Kristiansen, S.J. Hogan. \emph{Regularizations of two-fold bifurcations in planar piecewise smooth systems using blowup}. \textbf{SIAM J. Applied Dyn. Syst.} 14(4) (2015), 1731--1786.

\bibitem{KGR} Yu.A. Kuznetsov, S. Rinaldi, A. Gragnani. \emph{One-parameter bifurcations in planar Filippov systems}. \textbf{Internat. J. Bifur. Chaos Appl. Sci. Engrg.} 13(8) (2003), 2157--2188.

\bibitem{NovaesJeffrey} D.D. Novaes, M.R. Jeffrey. \emph{Regularization of hidden dynamics in piecewise smooth flows}. \textbf{J. Diff. Eq.} 259 (2015), 4615--4633.

\bibitem{PanazzoloSilva} D. Panazzolo, P.R. da Silva. \emph{Regularization of discontinuous foliations: Blowing up and sliding conditions via Fenichel Theory}. \textbf{J. Diff. Eq.} 263 (2017), 8362--8390.

\bibitem{PRS} O.H. Perez, G. Rond\'on, P.R. da Silva. \emph{Slow-Fast Normal Forms Arising from Piecewise Smooth Vector Fields}. \textbf{J. Dyn. Control Syst.} 29 (2023), 1709--1726.

\bibitem{SilvaSarmientoNovaes} P.R. da Silva, I.S. Meza-Sarmiento, D.D. Novaes. \emph{Nonlinear Sliding of Discontinuous Vector Fields and Singular Perturbation}. \textbf{Diff. Eq. Dyn. Syst.} 30 (2022), 675--693.

\bibitem{SotoMachado} J. Sotomayor, A.L.F. Machado. \emph{Structurally stable discontinuous vector fields in the plane}. \textbf{Qual. Theory Dyn. Syst.} 3 (1) (2002), 227--250.

\bibitem{SotoTeixeira} J. Sotomayor, M.A. Teixeira. \emph{Regularization of discontinuous vector fields}. International Conference on Differential Equations, Lisboa, 1996, 207--223.

\bibitem{WangZhang} C. Wang, X. Zhang. \emph{Stability loss delay and smoothness of the return map in slow-fast systems}. \textbf{SIAM J. Appl. Dyn. Syst.} 17(1) (2018), 788--822.

\bibitem{JJR1} J. Yao, J. Huang, R. Huzak. \emph{ Cyclicity of slow-fast cycles with two canard mechanisms}. \textbf{Chaos} 34, 053112 (2024).

\bibitem{JJRS} J. Yao, J. Huang, R. Huzak, S. Ruan. \emph{ Cyclicity of slow–fast cycles with one self-intersection point and two nilpotent contact points}. \textbf{Nonlinearity} 37, 115007 (2024).

\end{thebibliography}
\end{document}